\title{\textbf{The absolute of finitely generated groups: I.~Commutative (semi)groups}\thanks{Partially supported by the RFBR grant
17-01-00433.}}
\author{A.\,M.~Vershik$^{\alpha,\beta,\gamma,}$\thanks{e-mail: avershik@gmail.com} ~and 
A.\,V.~Malyutin$^{\alpha,\beta,}$\thanks{e-mail: malyutin@pdmi.ras.ru}}
\affil{\small
$^{\alpha}$St.~Petersburg Department of Steklov Institute of Mathematics,\\
$^{\beta}$St.~Petersburg State University,\\
$^{\gamma}$Institute for Information Transmission Problems, Moscow}
\date{}
\newcounter{numsec}[section]
\newtheoremstyle{Mystyle}
     {\topsep}
     {\topsep}
     {\it}%         Body font
     {}%         Indent amount (empty = no indent, \parindent = para indent)
     {\bfseries}% Thm head font
     { }%        Punctuation after thm head
     { }%     Space after thm head (\newline = linebreak)
     {\thmnumber{#2.~}\thmname{#1}\thmnote{ #3}.}%         Thm head spec
\theoremstyle{Mystyle}
\newtheorem{theorem}[numsec]{Theorem}
\newtheorem{proposition}[numsec]{Proposition}
\newtheorem{corollary}[numsec]{Corollary}
\newtheorem{claim}[numsec]{Claim}
\newcommand \be     {\begin{equation}}
\newcommand \ee     {\end{equation}}
\newcommand \N      {\mathbb N}
\newcommand \Z      {\mathbb Z}
\newcommand \R      {\mathbb R}
\newcommand \dd     {\partial}
\newcommand \inr    {\operatorname{int}}
\renewcommand \ge{\geqslant}
\newcommand   \rank     {\operatorname{rank}}
\newcommand \card     {\operatorname{card}}
\newcommand \Paths   {\operatorname{\mathscr{P}}}
\newcommand \Rc      {\mathcal{R}^c}
\newcommand \bRr      {\bar{\mathcal{R}}}
\newcommand \bRc      {\bar{\mathcal{R}}^c}
\newcommand \VV      {V_0}
\newcommand \Damma   {\operatorname{D}}
\newcommand \sch   {\sigma_{\scriptscriptstyle S}(G)}
\newcommand \bsch   {\bar{\sigma}_{\scriptscriptstyle S}(G)}
\begin{document}

\maketitle

\begin{abstract}
We give a complete description of the absolute of commutative finitely generated groups and semigroups. The absolute (previously called the exit boundary) is a further elaboration of the notion of the boundary of a random walk on a group (the Poisson--Furstenberg boundary); namely, the absolute of a (semi)group is the set of ergodic central measures on the compactum of all infinite trajectories of a simple random walk on the group. Related notions have been discussed in the probability literature: Martin boundary, entrance and exit boundaries (Dynkin), central measures on path spaces of graphs (Vershik--Kerov). A central measure (with respect to a finite system of generators of a group or semigroup) is a Markov measure on the space of trajectories whose cotransition distribution at every point is the uniform distribution on the generators (i.\,e., a measure of maximal entropy). For a more general notion of measures with a given cocycle, see~\cite{Â15}. For the group~$\Bbb Z$, the problem of describing the absolute is solved exactly by the classical de~Finetti's theorem. The main result of this paper, which is a far-reaching generalization of de~Finetti's theorem, is as follows: the absolute of a commutative semigroup coincides with the set of central measures corresponding to (nonstationary) Markov chains with independent identically distributed increments. Topologically, the absolute is (in the main case) a closed disk of finite dimension.
\end{abstract}

\section{Introduction}

The problem of describing the set of all Borel measures satisfying some invariance condition is typical for several areas of mathematics (probability, dynamical systems, graph theory, representation theory, etc.). Its most general setting presumes the existence of some equivalence relation on a Borel space (for instance, the orbit partition for a group action) and a 2-cocycle on this equivalence relation, and the problem is to describe all probability measures for which this cocycle is the Radon--Nikodym cocycle.

If the equivalence classes are countable, the equivalence relation itself is hyperfinite (i.\,e., is a monotone limit of finite equivalence relations), and the cocycle is identically~$1$, then the problem reduces to describing the so-called central measures (see below) on the space of infinite paths of a graded graph  (Bratteli diagram). In a certain sense, the notion of centrality coincides with the notion of invariance, namely, if we introduce the transformation of paths called the adic shift, then the centrality of a measure coincides with its invariance under this shift. It follows, in particular, that the set of central measures is a Choquet simplex.

\emph{The set of all ergodic central measures for a given equivalence relation, endowed with the weak topology of the space of all Borel measures, is called the absolute} (for more details on the setting and the history of the problem, see~\cite{V14a,Â14b}).

The special case of this problem  considered in this paper is that of finding the   \emph{absolute} for random walks on groups, semigroups, and for dynamic graphs. We develop an approach and present a solution of the problem for an important special case, namely, for random walks on countable commutative groups and semigroups. This case has important special properties as compared with the general case; the details are discussed below.

Note that the definition of absolute resembles the definition of boundaries in potential theory or the theory of random walks (the Poisson--Furstenberg (PF) boundary, Martin boundary, etc.; see, e.\,g.,~\cite{KV83}). And indeed, the absolute is a generalization, or, better to say, a refinement of the PF boundary; more exactly, it is nontrivial if and only if the PF boundary is trivial, i.\,e., consists of a single point; hence the absolute provides additional information on random walks on groups.

The foundations of the theory of absolute were laid in~\cite{V14a,Â14b,Â15}.
In~\cite{ÂÌ15}, a description of the absolute for the case of free groups and homogeneous trees is obtained. This paper deals with the opposite class of groups, that of commutative groups and semigroups. In a paper in preparation, we will consider the next case: the absolute of nilpotent groups and, in particular, Heisenberg groups; this case seems to be much more complicated and interesting.

Let us discuss in more detail  what is meant by a description of the absolute. The absolute is defined as a collection of measures. Measures on the compactum of infinite paths admit a direct description in terms of their values on finite paths (i.\,e., on cylinder sets corresponding to finite paths). For central measures, there is a more concise description in terms of functions on the set of vertices of the dynamic graph. With this approach, the absolute corresponds to classes of proportional minimal nonnegative harmonic functions on the dynamic graph. Another form of describing the absolute, in terms of the transition probabilities of  a Markov chain, appears since the random process corresponding to a central measure is Markovian. It is this description that is most convenient for our constructions.

The key result of this paper is Theorem~\ref{th:Levyprocess-AM}, which says that in the case of a commutative semigroup (with an arbitrary system of generators), the set of ergodic central measures  {\rm(}i.\,e., the absolute{\rm)} coincides with the set of central measures that give rise to Markov chains with independent identically distributed (i.i.d.) increments. The transition probabilities of a chain with i.i.d.\ increments are the same for all vertices of the graph, they depend only on the generators assigned to edges.

Theorem~\ref{th:Levyprocess-AM}, on the one hand, generalizes de Finetti's theorem and, on the other hand, is related to known results on harmonic functions on commutative groups (see~\cite{ChD60,DSW60}, and also~\cite[pp.~311--312]{Wo00}, and references therein). De Finetti's theorem follows from Theorem~\ref{th:Levyprocess-AM} if we consider the case of a free semigroup. Harmonic functions are related to the absolute as follows: in the case of a group, there is a natural bijection between the main part of the absolute (for the definition, see Section~\ref{sec:definitions}) and the space of classes of proportional minimal positive eigenfunctions of the Laplace operator.\footnote{Hereafter, given a semigroup~$G$ with a fixed finite system of generators~$S$, by the Laplace operator (Laplacian) we mean the operator on the space of functions on~$G$ that sends a function~$f$ to the function~$L_f$ defined by the formula
$$
L_f(g):=\frac{1}{|S|}\sum_{s\in S}f(gs).
$$}
(We will discuss this in more detail elsewhere.)

The paper is organized as follows. Section~\ref{sec:definitions} contains the basic definitions. Theorem~\ref{th:Levyprocess-AM} is proved in Section~\ref{sec:absolute-abel}, where we also present Theorem~\ref{pr:Levyprocess-equations}, which gives equations for describing the absolute. Theorems~\ref{th:Levyprocess-AM} and~\ref{pr:Levyprocess-equations} allow us to describe the absolute of a commutative semigroup given a set of defining relations. The absolute is described as the set of solutions of a system of equations in a Euclidean space. In the same section, we give a series of examples of such a description. In Section~\ref{sec:absolute-topol}, we use Theorems~\ref{th:Levyprocess-AM} and~\ref{pr:Levyprocess-equations} to derive theorems on the topological structure of the absolute of commutative groups and semigroups. In this case, the absolute is compact; moreover, for groups and cancellative semigroups,  it is a closed disk of finite dimension. The main technical difficulty in the proof of these theorems is to describe the degenerate part of the absolute. A technical result solving this difficulty is placed in a separate Section~\ref{sec:affine-space}.
In Section~\ref{sec:char}, we discuss the relation of the absolute of a commutative group to multiplicative semigroup characters. 

The authors are grateful to V.~Kaimanovich and S.~Podkorytov for valuable comments and discussions.

\section{Necessary definitions}
\label{sec:definitions}

First, we will give another definition of the absolute of a graph, which does not involve the group-theoretic terminology. By a
 \emph{graph} we mean a locally finite directed graph with a distinguished vertex. Loops and multiple edges are allowed. A
\emph{path} in a graph is a (finite or infinite) sequence of alternating vertices and edges of the form
$$
v_0, e_1, v_1, e_2, \dots, e_n, v_n,
$$
where $e_k$ is an edge leading from the vertex~$v_{k-1}$ to the vertex~$v_k$ 
(both vertices and edges may be repeated). We will consider graphs in which there are infinite paths starting at the distinguished vertex.

Let $\Gamma$ be a graph of the above form. Denote by $\Paths_\Gamma$ the set of all infinite paths in~$\Gamma$ starting at the distinguished vertex. This set is compact in the weak topology. We consider Borel probability measures on this space. Given such a measure, by the measure of a finite path~$R$ (starting at the distinguished vertex) we mean the measure of the cylinder of all infinite paths that begin with~$R$. A measure~$\nu$ on~$\Paths_\Gamma$ is called \emph{central} if it has the following property: for every vertex~$v$ of~$\Gamma$ and every positive integer~$n$, the measure~$\nu$ takes the same value at all paths of length~$n$ that lead from the distinguished vertex to~$v$. The set of central measures is a convex compactum, which is a simplex (see~\cite{Â15}) in the compactum of all measures on~$\Paths_\Gamma$.
A central measure is called \emph{ergodic} (or \emph{regular}) if it is an extreme point of this simplex.

The \emph{absolute} of a graph is the set of all ergodic central measures on the compactum of infinite paths starting at the distinguished vertex. The absolute of a finitely generated semigroup with a fixed finite system of generators is the absolute of the corresponding Cayley graph. In this definition, there is a subtlety related to noncancellative semigroups. Recall that a semigroup~$G$ is called  \emph{cancellative} if there are no elements $a,b,c$ in~$G$ such that  $a\neq b$, but $ac=bc$ and/or $ca=cb$.
In the case of the Cayley graph of a cancellative semigroup, the choice of the distinguished vertex does not affect the absolute; but in the case of a noncancellative semigroup~$G$, we will assume that in~$G$ there is an identity element (or it has been added), and it is this element that is chosen as the distinguished vertex. The absolute of a semigroup~$G$ with a system of generators~$S$ is denoted by~$\mathscr{A}_S(G)$.

A measure $\nu$ on the compactum of paths is called \emph{nondegenerate} if the probability of every path is nonzero. The
\emph{main part} of the absolute is its subset  consisting of the nondegenerate measures.
The set of degenerate ergodic central measures will be called the \emph{degenerate part} of the absolute.

A \emph{branching graph} is a graph in which  the set of paths leading from the distinguished vertex to every vertex~$v$ is nonempty (in this case, one says that $v$ is  \emph{reachable} from the distinguished vertex) and all these paths have the same length. On the set of vertices of a branching graph there is a natural grading by the distance to the distinguished vertex. It turns out that in the theory of absolute, graphs of this special form are most general in the following sense. To a graph~~$\Gamma$ with distinguished vertex~$v_0$ we canonically associate the corresponding
\emph{dynamic graph} $\Damma_{v_0}(\Gamma)$, which is a branching graph constructed in the following way. The
$n$th level of $\Damma_{v_0}(\Gamma)$ is a copy of the set of vertices of~$\Gamma$ connected with~$v_0$ by paths of length~$n$. There are exactly $k$ edges leading from a vertex~$v_1$ to a vertex~$v_2$ in $\Damma_{v_0}(\Gamma)$ if and only if the level of~$v_2$ is greater by one than the level of~$v_1$, and there are exactly $k$ edges leading from the vertex~$u_1$ of~$\Gamma$ corresponding to~$v_1$ to the vertex~$u_2$ of~$\Gamma$ corresponding to~$v_2$. Every branching graph is isomorphic to its dynamic graph. The spaces of paths starting at the distinguished point coincide for a graph and its dynamic graph. The absolute of a graph coincides with the absolute of its dynamic graph.

The construction of a branching graph has a counterpart at the level of groups and semigroups. A
\emph{branching monoid} is a monoid (semigroup with identity element) whose Cayley graph (with respect to some system of generators) is a branching graph. A semigroup is called a \emph{branching semigroup} if it is a branching monoid or can be obtained from a branching monoid by removing the identity element. The following property is characteristic for branching semigroups: if $G$ is a branching semigroup with respect to a system of generators~$S$, then for every element of~$G$, all words in the alphabet~$S$ representing this element have the same length (in other words, relations in this case identify only words of equal length). In a branching semigroup there is a canonical set of generators, which consists exactly of all irreducible elements of the semigroup. Systems of generators build from this canonical set (a system of generators may contain repeated elements, i.\,e., include elements  with multiplicities) will be called \emph{admissible}. The Cayley graph of a branching monoid is a branching graph only for an admissible system of generators. To a semigroup~$G$ with a fixed system of generators~$S$ we canonically associate a branching monoid $\Damma_S(G)$ defined as follows: the system of generators of $\Damma_S(G)$ is
a copy of~$S$, and the set of relations is the subset of the full set of relations for~$(G,S)$ consisting of the relations that identify words of equal length.

\section{The absolute of commutative groups and semigroups}
\label{sec:absolute-abel}

As one can easily see, the random process corresponding to a central measure (on an arbitrary graph of the form described above) is Markovian. For Markov chains on the Cayley graph of a semigroup, we introduce the notion of independent identically distributed increments: a Markov chain is said to have \emph{independent identically distributed increments} if its transition probabilities at all edges marked by the same generator are equal.\footnote{In the paper~\cite{Â17} in preparation, a more general notion of \emph{transfer} is introduced; it is a transformation on the path space of a graph with the meaning of a shift of increments. The notion of a Markov chain with independent identically distributed increments can be rephrased as that of a Bernoulli transfer.} For commutative semigroups, the following key theorem holds, which is a far-reaching generalization of de Finetti's theorem.

\begin{theorem}
\label{th:Levyprocess-AM}
For every commutative semigroup with an arbitrary finite system of generators, the set of ergodic central measures {\rm(}i.\,e., the absolute{\rm)} coincides with the set of central measures that give rise to Markov chains with independent identically distributed increments.

Thus the absolute is in a bijective correspondence with the set of measures on the set of generators that determine Markov chains with the above centrality property. An explicit condition that distinguishes these measures is given in Theorem~{\rm\ref{pr:Levyprocess-equations}}.
\end{theorem}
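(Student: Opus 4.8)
The plan is to prove the two inclusions separately, the engine being that commutativity turns \emph{centrality} into \emph{exchangeability}, after which de~Finetti's theorem and a short moment argument finish the job. First I would reduce to a graded commutative monoid. Replacing $\Gamma$ by its dynamic graph leaves the absolute unchanged, so I may assume I work with the Cayley graph of the branching monoid $\Damma_S(G)$, which is commutative and graded by word length, every defining relation being length-preserving. Since each edge carries a generator label, an infinite path coincides with its sequence of increments, so $\Paths = S^{\N}$ as a Borel space, the monoid structure entering only through the branching, i.e.\ through which increment sequences meet at a common vertex on a common level. Writing $m=(m_s)_{s\in S}\in\N^{S}$ for the type of a finite word and $q^{m}:=\prod_{s}q_s^{m_s}$, I would record the dictionary: central measures correspond to nonnegative $\phi$ on $\Damma_S(G)$ with $\phi(e)=1$ and $\phi(v)=\sum_{s\in S}\phi(vs)$, the transition probability along an $s$-edge out of $v$ being $\phi(vs)/\phi(v)$; a chain has i.i.d.\ increments precisely when this ratio is a constant $q_s$, i.e.\ when $\phi$ is multiplicative, which forces $\sum_s q_s=1$ and forces $q$ to respect every relation. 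Let $\Sigma\subset\Delta$ be the set of such $q$ in the closed simplex $\Delta=\{q\ge 0:\sum_s q_s=1\}$; the i.i.d.-increment central measures are exactly the product measures $\mu_q^{\otimes\N}$ with $q\in\Sigma$.

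The key structural point is that \emph{every central measure is exchangeable}. Transposing two adjacent increments of a path leaves all later vertices unchanged by commutativity, hence leaves the length and the endpoint of each finite cylinder unchanged; by centrality the cylinder measure is then unchanged, so $\nu$ is invariant under the transpositions generating $S_\infty$. As $S$ is finite, de~Finetti's theorem applies on $S^{\N}$ and yields $\nu=\int_{\Delta}\mu_q^{\otimes\N}\,dP(q)$ with a unique directing measure $P$ on $\Delta$, under which the type-$m$ cylinder has mass $\int_\Delta q^{m}\,dP(q)$. This already settles the easy inclusion: for $q\in\Sigma$ the measure $\mu_q^{\otimes\N}$ is central (because $q$ respects the relations) and is extreme among exchangeable measures (de~Finetti), hence \emph{a fortiori} extreme in the smaller convex set of central measures.

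The substance of the theorem is the reverse inclusion, and the main obstacle is to show that the directing measure $P$ of a central $\nu$ is supported on $\Sigma$ — that is, that the relations, invisible to exchangeability alone, are nonetheless imposed on $P$ by centrality. I would argue as follows. Fix a relation $m\sim m'$ (equal endpoints, hence equal lengths). Multiplying the relation by an arbitrary type $\kappa$ again produces equal endpoints, so centrality gives $\int_\Delta q^{m+\kappa}\,dP=\int_\Delta q^{m'+\kappa}\,dP$ for every $\kappa\in\N^{S}$, that is, $\int_\Delta q^{\kappa}\bigl(q^{m}-q^{m'}\bigr)\,dP(q)=0$ for all monomials $q^{\kappa}$. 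The monomials span an algebra containing the constants and separating points, so by Stone--Weierstrass they are dense in $C(\Delta)$; hence the signed measure $\bigl(q^{m}-q^{m'}\bigr)\,dP$ annihilates every continuous function and is zero, giving $q^{m}=q^{m'}$ for $P$-almost every $q$. Intersecting these full-measure events over the countably many relations yields $P(\Sigma)=1$.

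Consequently $\nu=\int_{\Sigma}\mu_q^{\otimes\N}\,dP(q)$ exhibits $\nu$ as a barycenter of extreme points of the simplex of central measures. If in addition $\nu$ is ergodic, uniqueness of the barycentric representation in a simplex forces $P=\delta_{q_0}$ for some $q_0\in\Sigma$, so $\nu=\mu_{q_0}^{\otimes\N}$ has i.i.d.\ increments; together with the easy inclusion this proves the claimed coincidence, and the resulting correspondence $\nu\leftrightarrow q_0\in\Sigma$ is precisely the second assertion, the explicit equations cutting out $\Sigma$ being supplied by Theorem~\ref{pr:Levyprocess-equations}. I expect Step~3 (forcing $\mathrm{supp}\,P\subset\Sigma$) to be the only genuine difficulty, and the device of multiplying relations by $\kappa$ and invoking Stone--Weierstrass is what dissolves it. The same end can be reached along the Choquet--Deny route, identifying ergodic central measures with minimal positive $\phi$ and those with multiplicative $\phi$; the de~Finetti argument above is, however, more self-contained and makes the generalization of de~Finetti's theorem transparent.
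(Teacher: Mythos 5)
Your proof is correct, but it takes a genuinely different route from the one in the paper. The paper's proof is a short translation argument: for an ergodic central measure $\nu$ and a finite path $P$ with $\nu(\Paths_P)>0$, the left-translate $\phi_*(\nu_P)$ of the conditional measure is again central; commutativity gives $\nu(\Paths_P)\cdot\phi_*(\nu_P)(Q)=\nu(PQ)=\nu(QP)\le\nu(Q)$, so $\nu$ dominates the finite central measure $\nu(\Paths_P)\cdot\phi_*(\nu_P)$, and extremality forces $\phi_*(\nu_P)=\nu$, which is exactly the i.i.d.-increments property; the converse rests on mutual singularity of distinct measures with i.i.d.\ increments. You instead observe that commutativity makes every central measure exchangeable on $S^{\N}$, apply de~Finetti to obtain a directing measure, pin its support to the precentral set via the moment identities $\int q^{\kappa}(q^{m}-q^{m'})\,dP=0$ together with Stone--Weierstrass, and conclude by uniqueness of the representing measure at an extreme point; your easy inclusion (extreme among exchangeables, hence extreme among centrals) is the mirror image of the paper's mutual-singularity remark. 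All steps check out, including the two that need care: a relation multiplied by an arbitrary $\kappa$ is still a relation, and the relations form a countable family, so the almost-everywhere events can be intersected. The trade-off is this: the paper's shift argument is shorter, self-contained, and actually reproves de~Finetti (the free commutative semigroup being the special case), which is what justifies calling the theorem a generalization of it; your argument consumes de~Finetti as a black box, deriving the general case from the free one, but in exchange it isolates cleanly where commutativity enters (exchangeability) and where centrality exceeds exchangeability (the algebraic constraints on the directing measure), and it avoids conditioning on the subcompacta $\Paths_P$, a point the paper must footnote for noncancellative semigroups. Two minor remarks: the parenthetical ``equal endpoints, hence equal lengths'' is licensed only by your prior reduction to the branching monoid $\Damma_S(G)$, not in $G$ itself; and in the final step you need only Bauer's characterization of extreme points of a metrizable compact convex set, not the full simplex property of the set of central measures.
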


\begin{proof}
For brevity, measures that give rise to Markov chains with independent identically distributed increments will be called
\emph{measures with i.i.d.\ increments}. Let $P$ be a finite path (starting at the distinguished point) in the Cayley graph of the given semigroup (with respect to the given system of generators). The left translation in the semigroup determines a homeomorphism between the subcompactum~$\Paths_P$ of infinite paths that begin with~$P$ and the compactum of all infinite paths (starting at the distinguished vertex), as well as an isomorphism~$\phi_*$ between the spaces of measures on these compacta.\footnote{Examples of noncancellative semigroups  are interesting, but they are partly beyond the context we are interested in. In particular, the Cayley graph of such a semigroup is inhomogeneous:  a pair of edges marked by $s_1$ and $s_2$ may have the same initial vertices and the same final vertices at one segment of the graph, and the same initial vertices but different final vertices at another segment of the graph. In the context of this proof, it is worth mentioning that in the case of a noncancellative semigroup, the tail filtrations on the subcompactum~$\Paths_P$ and on the compactum of all paths are not necessarily isomorphic.} If $\nu$ is an ergodic central measure and $\nu(\Paths_P)>0$,
denote by $\nu_P$ the corresponding conditional measure on~$\Paths_P$.
Then the measure~$\phi_*(\nu_P)$ is also central (since $a=b$ implies $ca=cb$).
As one can easily see, since the semigroup is commutative,  the central measure~$\nu$ dominates the (finite central) measure~$\nu(\Paths_P)\cdot\phi_*(\nu_P)$. By ergodicity, it follows that $\nu=\phi_*(\nu_P)$.
This proves that~$\nu$ is a measure with i.i.d.\ increments. On the other hand, if all ergodic central measures have i.i.d.\ increments, then every central measure with i.i.d.\ increments is ergodic, since noncoinciding measures with i.i.d.\ increments are mutually singular (the problem reduces to the mutual singularity of noncoinciding Bernoulli measures).
\end{proof}

\subsection{Explicit computation of the absolute}
Theorem~\ref{th:Levyprocess-AM} provides a recipe for describing the absolute, and below we carry out its computation.

For an arbitrary semigroup~$G$ with a fixed finite system of generators~$S$, the set~$\mathscr{I}_S(G)$ of measures with i.i.d.\ increments can be identified in a natural way with the simplex~$\Delta_S$ of probability distributions on~$S$:
to a distribution~$\mu$ on $S$ we associate the measure in~$\mathscr{I}_S(G)$ for which the probability of the increment by~$s\in S$ is equal to~$\mu(s)$. A distribution on~$S$ for which the corresponding measure in~$\mathscr{I}_S(G)$ is central will be called
\emph{precentral}. By Theorem~\ref{th:Levyprocess-AM}, in the case of a commutative semigroup, the absolute coincides with the intersection of the  ($S-1$)-dimensional simplex~$\mathscr{I}_S(G)\cong \Delta_S$  (in general, it is not convex in the space of measures on the compactum of paths) with the infinite-dimensional simplex~$\Sigma_S(G)$ of central measures:
$$
\mathscr{A}_S(G)=\Sigma_S(G)\cap \mathscr{I}_S(G).
$$
Thus Theorem~\ref{th:Levyprocess-AM} reduces the problem of describing the absolute~$\mathscr{A}_S(G)$ of a commutative semigroup to the problem of describing the set~$\sch$ of precentral distributions in~$\Delta_S$. The (pre)centrality condition splits into a set of necessary conditions  for pairs of finite paths of equal length leading to the same vertex of the Cayley graph. The following proposition immediately follows from the definition of centrality.

\begin{theorem}
\label{pr:Levyprocess-equations}
A probability distribution~$\mu=\{\mu(s);s\in S\}$ on a finite system of generators~$S$ of a commutative semigroup~$G$ 
is precentral if and only if for every pair of vectors $(m_s)_{s\in S}$ and $(n_s)_{s\in S}$ from $\N_0^S$ such that
\be
\label{eq:central-22}
\sum_{s\in S} m_s=\sum_{s\in S} n_s \quad\text{in~$\N_0$}
\qquad\text{and}\qquad
\sum_{s\in S} m_s\cdot s=\sum_{s\in S} n_s \cdot s \quad\text{in~$G$},
\ee
the following equation holds:
\be
\label{eq:central-pr-2-compact}
\prod_{s\in S}(\mu(s))^{m_s}=\prod_{s\in S}(\mu(s))^{n_s}.
\ee
In other words, the set~$\sch$ of precentral distributions corresponding to the absolute coincides with the set of distributions on~$S$ that are solutions of equations~\eqref{eq:central-pr-2-compact} for all coefficients satisfying~\eqref{eq:central-22}.
\end{theorem}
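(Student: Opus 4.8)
The plan is to unwind the definition of centrality for the particular measure with i.i.d.\ increments attached to $\mu$ and observe that it collapses to exactly the stated system of equations. First I would record the basic formula for path probabilities: if $P$ is a finite path starting at the distinguished vertex whose edge labels are the generators of $S$, each $s$ occurring with multiplicity $k_s$, then under the measure in $\mathscr{I}_S(G)$ determined by $\mu$ the probability of (the cylinder over) $P$ equals $\prod_{s\in S}(\mu(s))^{k_s}$. This is immediate from the i.i.d.\ structure: the transition probability along every edge labelled $s$ is $\mu(s)$, so the weight of a path is the product of its edge weights, and this product depends only on the label multiset $(k_s)_{s\in S}$, not on the order in which the labels occur.

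Next I would translate the endpoint and length of a path into the same multiset data. Because $G$ is commutative, a path with label multiset $(k_s)_{s\in S}$ starting at the distinguished vertex ends at the element $\sum_{s\in S}k_s\cdot s$ of $G$, and its length is $\sum_{s\in S}k_s$. Conversely, every multiset $(k_s)_{s\in S}\in\N_0^S$ is realized by at least one genuine path (apply the generators in any fixed order), so the vertices at a given level are exactly the elements $\sum_{s}k_s\cdot s$. Thus, for the measure coming from $\mu$, the assertion ``two paths have equal length and the same endpoint'' is precisely the condition that their label multisets $(m_s)_{s\in S}$ and $(n_s)_{s\in S}$ satisfy \eqref{eq:central-22}, while ``the two paths carry equal measure'' is precisely \eqref{eq:central-pr-2-compact}.

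With these two dictionaries in hand the proof is a direct verification of both implications. Centrality of the i.i.d.\ measure means, by definition, that any two paths of the same length ending at the same vertex receive the same measure. If $\mu$ is precentral, then applying this to paths realizing $(m_s)$ and $(n_s)$ --- which have equal length and common endpoint by \eqref{eq:central-22} --- yields \eqref{eq:central-pr-2-compact}, giving the forward implication. Conversely, if \eqref{eq:central-pr-2-compact} holds for every pair satisfying \eqref{eq:central-22}, then any two equal-length paths to a common vertex, whose label multisets necessarily satisfy \eqref{eq:central-22}, receive equal weight $\prod_{s\in S}(\mu(s))^{k_s}$, so the i.i.d.\ measure is central and $\mu$ is precentral. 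Combined with the identification $\mathscr{A}_S(G)=\Sigma_S(G)\cap\mathscr{I}_S(G)$ furnished by Theorem~\ref{th:Levyprocess-AM}, this identifies $\sch$ with the solution set of \eqref{eq:central-pr-2-compact}.

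Since the argument is essentially a change of bookkeeping, there is no single hard step; the only points needing care are the two I flagged --- that the i.i.d.\ weight of a path is order-independent (so centrality among reorderings of a single multiset is automatic, and the real content lies in comparing \emph{distinct} multisets of equal length and endpoint), and that every multiset meeting the length constraint is actually realized by a path, so that centrality genuinely applies to it. In the noncancellative case one must additionally note that multiple edges bearing the same label are each assigned weight $\mu(s)$, but this leaves the multiset computation of the weight unchanged, so the same equivalence goes through.
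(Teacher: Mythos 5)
Your proposal is correct and takes essentially the same route as the paper, which simply states that the theorem ``immediately follows from the definition of centrality''; you have filled in exactly the intended bookkeeping (i.i.d.\ path weight depends only on the label multiset, and equal length plus common endpoint for commutative $G$ is precisely condition~\eqref{eq:central-22}). Your added remarks on realizability of every multiset by an actual path and on multiple edges in the noncancellative case are the right points to flag and are consistent with the paper's conventions.
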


Theorem~\ref{pr:Levyprocess-equations} allows one to obtain a description of the absolute of a commutative semigroup from the set of defining relations. Conditions~\eqref{eq:central-pr-2-compact} are called the \emph{centrality equations} and are of the main interest in the study of the topology of the absolute. When describing the absolute in the context of Theorem~\ref{pr:Levyprocess-equations}, it is convenient to take into account the following considerations.

1. The pairs of vectors satisfying condition~\eqref{eq:central-22} for given~$G$ and~$S$ form a semigroup (we denote it by~$\Rc_S(G)$; this semigroup describes the relations in the branching monoid $\Damma_S(G)$). To verify the precentrality, it suffices to verify condition~\eqref{eq:central-pr-2-compact} for an arbitrary set of generators of the semigroup~$\Rc_S(G)$. 

2. The pairs of vectors from~$\N_0^S$ consisting of two equal\footnote{The same vector in~$\N_0^S$ can represent different paths of equal length leading to the same vertex.} vectors form a subsemigroup in~$\Rc_S(G)$ (we denote it by~$R_0$). Equations~\eqref{eq:central-pr-2-compact} corresponding to elements from~$R_0$ are trivial, i.\,e., they are identities, so in order to verify the precentrality, it suffices to take an arbitrary set of vectors from~$\Rc_S(G)$ that yields a generating set being combined with~$R_0$. In other words, in order to describe the absolute, it suffices to take the system of equations~\eqref{eq:central-pr-2-compact} for a set of noncommutative relations that is defining for the branching monoid~$\Damma_S(G)$ modulo the commutativity relations.

\paragraph{Examples.}
1. The absolute of the commutative semigroup freely generated by a set of generators~$S$ is represented by the simplex~$\Delta_S$,
since vectors $(m_s)_{s\in S}$ and $(n_s)_{s\in S}$ from $\N_0^S$ satisfy condition~\eqref{eq:central-22} only if they coincide, and in this case equation~\eqref{eq:central-pr-2-compact} becomes an identity. At the level of branching monoids, this fact manifests itself as the absence of noncommutative relations.

2. Let $G=\Z$ and $S=\{+1,-1\}$. In this case, as in Example~1, vectors
 $(m_s)_{s\in S}$ and $(n_s)_{s\in S}$ satisfy condition~\eqref{eq:central-22} only if they coincide, so the absolute~$\mathscr{A}_{\{+1,-1\}}(\Z)$ is homeomorphic to the one-dimensional simplex. At the level of branching monoids, the explanation is that the branching monoid for $(\Z,\{+1,-1\})$ is the free monoid with two generators.

3. Let $G=\Z^2$ and $S=\{(+1,0),(-1,0),(0,+1),(0,-1)\}$.
The semigroup~$\Rc_S(G)$ is generated by the subsemigroup~$R_0$ and the pair $((1,1,0,0),(0,0,1,1))$.
This pair gives rise to the equation  $x_1x_2=x_3x_4$ in~$\R^4$. The absolute is represented by the intersection of the set of solutions of this equation with the simplex
$$\left\{x\in\R^4:\sum x_i=1, x_i\ge0\right\}.$$ 
It is homeomorphic to the closed disk of dimension~$2$ (see Theorem~\ref{th:abel-top-group}).

4. Let $G=\Z^d$ and $S$ be the standard symmetric system of generators. This is a generalization of the previous example. In this case, the system of equations is as follows:
$$
x_1x_2=x_3x_4=\dots=x_{2d-1}x_{2d}.
$$ 

5. For $G=\Z$ and $S=\{0,+6,-1\}$, the centrality relation takes the form $x_1^7=x_2x_3^6$.

6. Let $G$ be the commutative semigroup with three generators $a$, $b$, $c$ and the additional noncommutative relation
 $a+b=a+c$. Then $G$ is a branching semigroup, so the set~$\sch$ is determined by the equation $\mu(a)\mu(b)=\mu(a)\mu(c)$ corresponding to the relation $a+b=a+c$. The absolute is homeomorphic to the tripod~$T$. 

7. If $G$ is the commutative semigroup with generators $a$, $b$, $c$ and the additional noncommutative relations
$a+b=2c$ and $a+c=2b$, then the absolute is disconnected, it consists of two points.

\medskip

\begin{proposition}
\label{pr:abel-torsion}
The absolute of the quotient semigroup of a commutative semigroup by a finite subgroup coincides with the absolute of the semigroup. 
\end{proposition}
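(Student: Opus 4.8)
The plan is to reduce the statement to the combinatorial description of the absolute furnished by Theorems~\ref{th:Levyprocess-AM} and~\ref{pr:Levyprocess-equations}, and then to compare the two sets of precentral distributions directly. Write $G$ additively and assume (as in the definitions) that it is a monoid with identity~$0$, the distinguished vertex; let $H\subseteq G$ be a finite subgroup with $0\in H$, and let $\pi\colon G\to G/H$ be the quotient map, so that $S$ (through $\pi$) is also a system of generators of $G/H$ indexed by the same set. The preliminary observation I would make is that the space of infinite paths from the distinguished vertex is canonically the sequence space $S^{\infty}$ for \emph{both} $G$ and $G/H$ (a path is recorded by its sequence of edge labels), and under this identification a measure with i.i.d.\ increments with parameter $\mu\in\Delta_S$ is in both cases literally the Bernoulli measure $\mu^{\otimes\infty}$, depending only on $\mu$ and $S$ and not on the relations. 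Hence the two absolutes sit inside the same space of probability measures on $S^{\infty}$, and by Theorem~\ref{th:Levyprocess-AM} it suffices to prove that the sets of precentral distributions coincide: $\sigma_S(G)=\sigma_S(G/H)$ as subsets of $\Delta_S$.

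One inclusion is immediate. If a pair $(m_s),(n_s)$ satisfies~\eqref{eq:central-22} for~$G$, i.e.\ $\sum_s m_s=\sum_s n_s$ and $\sum_s m_s\cdot s=\sum_s n_s\cdot s$ in~$G$, then the same equalities survive under $\pi$, so the pair also satisfies~\eqref{eq:central-22} for $G/H$. Thus the centrality equations~\eqref{eq:central-pr-2-compact} imposed for $G/H$ form a superset of those imposed for~$G$, and a larger system of equations has a smaller solution set; therefore $\sigma_S(G/H)\subseteq\sigma_S(G)$.

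The real content is the reverse inclusion, and this is where the finiteness of~$H$ enters. Take $\mu\in\sigma_S(G)$ and a pair $(m_s),(n_s)$ witnessing a centrality equation for $G/H$, so that $\sum_s m_s=\sum_s n_s$ while $\sum_s m_s\cdot s$ and $\sum_s n_s\cdot s$ lie in the same $H$-coset; since $0\in H$ and $H$ is a group, this means $\sum_s m_s\cdot s=\sum_s n_s\cdot s + h$ in~$G$ for some $h\in H$. Let $k$ be the order of~$h$, so that $k\,h=0$. Replacing the vectors by their $k$-fold multiples, the pair $(k m_s),(k n_s)$ still has equal coordinate sums, and in~$G$ we obtain $\sum_s k m_s\cdot s = k\,h+\sum_s k n_s\cdot s=\sum_s k n_s\cdot s$; hence $(k m_s),(k n_s)$ genuinely satisfies~\eqref{eq:central-22} for~$G$. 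Since $\mu\in\sigma_S(G)$, equation~\eqref{eq:central-pr-2-compact} gives $\prod_s\mu(s)^{k m_s}=\prod_s\mu(s)^{k n_s}$, that is $\bigl(\prod_s\mu(s)^{m_s}\bigr)^{k}=\bigl(\prod_s\mu(s)^{n_s}\bigr)^{k}$. Both sides are nonnegative reals, so extracting the $k$-th root yields $\prod_s\mu(s)^{m_s}=\prod_s\mu(s)^{n_s}$, which is exactly the centrality equation for the chosen $G/H$-relation. Thus $\mu\in\sigma_S(G/H)$, giving $\sigma_S(G)\subseteq\sigma_S(G/H)$ and completing the argument.

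The step I expect to be the crux is this reverse inclusion: the point is that passing to the quotient adds relations but no genuinely new constraints on~$\mu$, because each new relation differs from a true relation of~$G$ only by a coset shift~$h$ that can be annihilated by raising to the order of~$h$, after which the multiplicativity of~\eqref{eq:central-pr-2-compact} together with nonnegativity permits taking a $k$-th root. The remaining points are bookkeeping rather than conceptual: that $S$ projects to a (possibly repeated) system of generators of $G/H$ on the same index set, so that $\Delta_S$ and the Bernoulli parametrization are literally shared; the convention that $H$ contains the identity~$0$, so that $k\,h$ is the distinguished vertex and the clearing step is valid (in the cancellative case a finite subgroup is trivial and the statement is vacuous); and the harmless case $\mu(s)=0$, for which the $k$-th root identity $0=0$ still holds.
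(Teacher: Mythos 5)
Your proof is correct and takes essentially the same route as the paper's: both reduce the claim via Theorems~\ref{th:Levyprocess-AM} and~\ref{pr:Levyprocess-equations} to showing that the sets of precentral distributions coincide, obtain one inclusion from the fact that relations of~$G$ push forward to~$G/H$ under the quotient homomorphism, and obtain the other by multiplying a $G/H$-relation by the order~$k$ of the offending element $h\in H$ to produce a genuine $G$-relation and then (as you spell out and the paper leaves implicit) extracting $k$-th roots of the resulting equation between nonnegative reals. The only slip is the parenthetical claim that in the cancellative case a finite subgroup must be trivial --- false, e.g.\ for $\Z/2\Z$ or $\N\times\Z/2\Z$ --- but this aside plays no role in your argument.
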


\begin{proof}
Assume that there is a semigroup epimorphism $G_1\to G_2$ and we consider the system of generators~$S$ in~$G_2$ inherited from~$G_1$. If a pair of vectors $(m_s)_{s\in S}$ and $(n_s)_{s\in S}$ from $\N_0^S$ satisfies condition~\eqref{eq:central-22} for~$G_1$,
then it also satisfies condition~\eqref{eq:central-22} for~$G_2$, since we deal with a homomorphism. 
Conversely, if an epimorhpism $G_1\to G_2$ corresponds to taking the quotient by a finite subgroup and a pair of vectors $(m_s)_{s\in S}$ and $(n_s)_{s\in S}$ satisfies condition~\eqref{eq:central-22} for~$G_2$, then there is a positive integer~$k$ such that the pair
of vectors $(k\cdot m_s)_{s\in S}$ and $(k\cdot n_s)_{s\in S}$ satisfies condition~\eqref{eq:central-22} for~$G_1$. In view of Theorem~\ref{pr:Levyprocess-equations}, it follows that the sets~$\sigma_{\scriptscriptstyle S}(G_1)$ and~$\sigma_{\scriptscriptstyle S}(G_2)$ of precentral distributions are described by equivalent systems of equations, and Theorem~~\ref{th:Levyprocess-AM} implies the desired assertion.
\end{proof}

\paragraph{Comments.}
1. The idea of shifting used in the proof of Theorem~\ref{th:Levyprocess-AM} appeared in the probability literature of the 1960s in connection with harmonic functions; see~\cite[Theorem~5]{DSW60}, \cite[Lemma~1]{Ìî67}, and also \cite[Lemma~25.2]{Wo00}.

2. Already for nilpotent groups, Theorem~\ref{th:Levyprocess-AM} does not hold for an arbitrary ergodic central measure.

\section{The topology of the absolute of commutative (semi)groups}
\label{sec:absolute-topol}

As we have already mentioned, the set of all central measures is a simplex, and the absolute is its Choquet boundary. A meaningful question is what topology is induced on the absolute by the weak topology in the space of measures on the compactum of paths. In this section, we study the topology of the absolute of commutative (semi)groups. From Theorems~\ref{th:Levyprocess-AM} and~\ref{pr:Levyprocess-equations} we derive (the proofs are given below) the following theorems on the topological structure of the absolute. The most important result is as follows: in the case of groups and cancellative semigroups, the main part of the absolute is the interior of a disk, and the degenerate part is the boundary of this disk.

\begin{theorem}[(on the topology of the absolute of commutative groups)]
\label{th:abel-top-group}
The absolute of a finitely generated commutative group with respect to any finite system of semigroup generators is homeomorphic to the closed disk of dimension equal to the rank of the group. The main part of the absolute corresponds to the interior of the disk.
\end{theorem}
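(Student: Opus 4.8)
The plan is to use Theorems~\ref{th:Levyprocess-AM} and~\ref{pr:Levyprocess-equations} to replace the absolute $\mathscr{A}_S(G)$ by the explicit set $\sch\subseteq\Delta_S$ cut out of the simplex by the binomial centrality equations~\eqref{eq:central-pr-2-compact}, and then to analyze this set by ``linearizing in logarithms'' on the main part and recognizing the whole set as a moment polytope. Since $\sch$ is closed in the compact simplex $\Delta_S$, it is automatically compact.

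First, on the main part all $\mu(s)>0$, so I set $x_s=\log\mu(s)$. Each equation~\eqref{eq:central-pr-2-compact} becomes the \emph{linear} equation $\sum_s(m_s-n_s)x_s=0$, and by~\eqref{eq:central-22} the admissible difference vectors $k=(m_s-n_s)$ run exactly over the lattice $L:=\ker\phi$, where $\phi\colon\Z^S\to\Z\oplus G$ is given by $\phi(e_s)=(1,s)$ (any $k\in L$ splits into nonnegative and negative parts, recovering an admissible pair). Hence the logarithm of the main part is $V\cap\{x:\sum_se^{x_s}=1\}$, with $V:=(\R\otimes L)^{\perp}\subseteq\R^S$. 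Since every $k\in L$ satisfies $\sum_sk_s=0$, the all-ones vector $\mathbbm{1}$ lies in $V$, and the normalization $\sum e^{x_s}=1$ merely selects one representative in each coset of $\R\cdot\mathbbm{1}$. Therefore the main part is homeomorphic to $V_0:=V\cap\mathbbm{1}^{\perp}$, a real vector space of dimension $\dim V-1$, via $v\mapsto\mu$ with $\mu(s)=e^{v_s}/\sum_te^{v_t}$.

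Second comes the dimension count. Because $\R$ is flat over $\Z$, one has $\R\otimes L=\ker(\R\otimes\phi)$, and $\R\otimes\phi\colon\R^S\to\R\oplus(\R\otimes G)=\R^{1+r}$ has image $\operatorname{span}\{(1,\bar s):s\in S\}$, where $\bar s$ is the image of $s$ in $\R\otimes G\cong\R^r$ and $r=\rank G$. Thus $\dim V=\operatorname{rank}(\R\otimes\phi)=\dim\operatorname{span}\{(1,\bar s):s\in S\}$. Here the hypothesis that $G$ is a \emph{group} enters decisively: since $S$ generates $G$ as a semigroup, the identity is a nontrivial nonnegative integer combination of generators, so $0$ lies in the convex hull of $\{\bar s\}$; together with $\operatorname{span}\{\bar s\}=\R^r$ this forces the affine hull of $\{\bar s\}$ to be all of $\R^r$, whence $\dim\operatorname{span}\{(1,\bar s)\}=r+1$. (For a general semigroup the affine hull may drop by one, which is exactly why the clean statement is special to groups and cancellative semigroups.) Consequently $\dim V=r+1$, $\dim V_0=r$, and the main part is homeomorphic to $\R^r$, the interior of an $r$-disk.

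It remains to compactify, i.e.\ to show that all of $\sch$ is a closed $r$-disk whose boundary is the degenerate part; this is the main obstacle, and it is precisely the point deferred to Section~\ref{sec:affine-space}. The route I would take is to identify $\sch$ with the nonnegative real part of the toric variety of the lattice $L$ inside the simplex. Writing $A$ for the $(1+r)\times|S|$ matrix with columns $(1,\bar s)$, one checks $V=\operatorname{rowspan}A$, and I would show that the ``moment map'' $\mu\mapsto A\mu$ is a homeomorphism from $\sch$ onto the polytope $P:=\operatorname{conv}\{(1,\bar s):s\in S\}\cong\operatorname{conv}\{\bar s:s\in S\}\subseteq\R^r$, carrying the main part onto $\operatorname{int}P$ and the degenerate part onto $\partial P$. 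Since $P$ is a compact convex polytope of full dimension $r$ (by the same convex-hull argument), it is homeomorphic to the closed $r$-disk with boundary sphere $\partial P$, which gives the theorem. The genuinely delicate step — a Birch-type statement for log-linear (toric) models — is the behaviour along the boundary: one must verify that each face of $\Delta_S$ meeting $\sch$ contributes exactly the expected face of $P$, equivalently that every degenerate precentral distribution is a limit of nondegenerate ones and is correctly located by its support. Carrying this out requires the careful face-by-face analysis of the degenerate part, and I expect it, rather than the main-part computation, to be where essentially all the work lies.
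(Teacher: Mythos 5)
Your reduction of the theorem to the set $\sch\subseteq\Delta_S$, the log-linearization of the main part, and the dimension count are all correct and coincide with the paper's argument (Claim~\ref{cl:A2} and the final paragraph of the proof of Theorems~\ref{th:abel-top-group}--\ref{th:abel-top-semi}); your convex-hull argument that $0\in\operatorname{conv}\{\bar s\}$ forces $\dim\operatorname{span}\{(1,\bar s)\}=r+1$ is a clean, self-contained way to get the dimension $r$, where the paper instead phrases this as the corank-one relation between $\bRr_S(G)$ and $\bRc_S(G)$. Your parenthetical observation that every $k\in L$ splits into an admissible pair is exactly the point (cancellativity) that justifies replacing the centrality equations by the reduced ones on all of $\Delta_S$, not just on the interior, so $\sch=\Lambda_L$ is correctly established for groups.

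The genuine gap is that the entire compactification step is announced but not proved. The statement you defer --- that $\mu\mapsto A\mu$ is a homeomorphism of $\sch$ onto the polytope $P=\operatorname{conv}\{(1,\bar s)\}$ carrying the main part onto $\inr(P)$ and the degenerate part onto $\partial P$ --- is precisely Proposition~\ref{pr:A7} of the paper (your moment map \emph{is} the paper's quotient map $\rho\colon\R^S\to\R^S/K$ with $K=\R\otimes L=\ker A$, since $V=\operatorname{rowspan}A=K^\perp$), and proving it is where all of Section~\ref{sec:affine-space} is spent; a proof of the theorem cannot simply cite it as ``the route I would take.'' Moreover, the face-by-face Birch-type analysis you anticipate is not how the paper closes this gap, and you would find it painful: the support of a degenerate precentral distribution need not meet every face of $\Delta_S$ in the pattern a naive stratification suggests. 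The paper instead uses a global one-dimensional argument: for any chord of $\Delta_S$ with endpoints $a,b$, the single binomial equation $\lambda_{a-b}$ meets that chord in exactly one point, which is interior to the chord (Claim~\ref{cl:A5}, a monotonicity argument using that some coordinate vanishes at each endpoint). This immediately gives injectivity of $\rho$ on $\Lambda_K$ and the fact that $\rho(\Lambda_K\cap\partial\Delta_S)\subseteq\partial(\rho(\Delta_S))$; surjectivity onto $\inr(\rho(\Delta_S))$ then follows from your open-disk computation together with Brouwer's invariance of domain and compactness of $\Lambda_K$. If you replace your deferred ``delicate step'' by this chord lemma and the invariance-of-domain argument, your proof becomes complete and is then essentially identical to the paper's.
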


Theorem~\ref{th:abel-top-group} can be extended to cancellative semigroups.

\begin{theorem}[(on the topology of the absolute of cancellative commutative semigroups)]
\label{th:abel-top-cancel}
The absolute of a finitely generated cancellative commutative semigroup~$G$ {\rm(}with respect to any finite system of generators~$S${\rm)} is homeomorphic to the closed disk whose dimension either coincides with the rank of the group of fractions\footnote{The \emph{group of fractions} of a semigroup~$G$ is the group with the same generators as~$G$ in which the relations are all corollaries of the relations in~$G$.} of $G$, or is one less than this rank if $G$ is a branching semigroup and $S$ is an admissible system of generators.
 The main part of the absolute corresponds to the interior of the disk.
\end{theorem}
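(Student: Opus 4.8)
The plan is to pass, via Theorem~\ref{th:Levyprocess-AM}, from the absolute $\mathscr{A}_S(G)$ to the set $\sch\subseteq\Delta_S$ of precentral distributions, and then to analyze the latter using Theorem~\ref{pr:Levyprocess-equations}. Since $G$ is cancellative and commutative, it embeds in its group of fractions $\wh G$, so the relations in $G$ are exactly the restrictions of relations in $\wh G$. Writing a pair satisfying~\eqref{eq:central-22} as $d=m-n$, the relevant data is the lattice
\be
\label{eq:lattice-Lambda}
\Lambda := \Bigl\{ d \in \Z^S : \textstyle\sum_{s} d_s = 0 \ \text{and}\ \sum_{s} d_s\cdot s = 0 \ \text{in}\ \wh G \Bigr\},
\ee
and the centrality equations~\eqref{eq:central-pr-2-compact} say precisely that $\prod_s \mu(s)^{d_s}=1$ for every $d\in\Lambda$. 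Taking logarithms on the open simplex, i.e.\ setting $x_s:=\log\mu(s)$, turns these multiplicative relations into the linear conditions $\langle d,x\rangle=0$ for all $d\in\Lambda$. Thus the main part of the absolute is identified with $\{x\in V:\sum_s e^{x_s}=1\}$, where $V:=\Lambda^{\perp}\subseteq\R^S$ is the linear subspace cut out by $\Lambda$.

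Next I would compute the dimension and verify that the main part is an open disk. Let $\psi\colon\Z^S\twoheadrightarrow\wh G$, $e_s\mapsto s$, and $L_0:=\ker\psi$; then $\rank L_0=|S|-r$, where $r=\rank\wh G$. The length functional $\ell(d)=\sum_s d_s$ either vanishes on $L_0$ or not, and by the characterization of branching semigroups in Section~\ref{sec:definitions} this dichotomy is exactly the branching case (with $S$ admissible) versus the rest. Since $\Lambda=L_0\cap\ker\ell$, one gets $\rank\Lambda=|S|-r$ in the branching case and $\rank\Lambda=|S|-r-1$ otherwise, so $k:=\dim V=|S|-\rank\Lambda$ equals $r$ or $r+1$ respectively. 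Crucially $\mathbf 1=(1,\dots,1)\in V$, because $\Lambda\subseteq\ker\ell$ by definition. The log-sum-exp function $F(x)=\log\sum_s e^{x_s}$ satisfies $F(x+t\mathbf 1)=F(x)+t$, hence is strictly increasing along $\mathbf 1$; so the level set $\{x\in V:F(x)=0\}$ is the graph of a function over $V/\R\mathbf 1\cong\R^{k-1}$ and is homeomorphic to it. This shows the main part is an open disk of dimension $k-1$, namely $r-1$ (branching) or $r$ (non-branching), matching the claimed interior.

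The hard part is the degenerate part and the passage to the \emph{closed} disk. A degenerate precentral distribution has some support $S'\subsetneq S$, and its admissibility amounts to a compatibility of~\eqref{eq:central-pr-2-compact} with the face of $\Delta_S$ spanned by $S'$: a relation $d\in\Lambda$ may ``leave'' $S'$ on one side only if it leaves on the other. My plan is to realize $\sch$ as the image of a moment-type map and to match the admissible supports $S'$ with the faces of a polytope of dimension $k-1$ determined by $\Lambda$; equivalently, to show that every ray $x_0+tw$ ($t\to+\infty$, $0\ne w\in V$) in the main part converges in $\Delta_S$ to a degenerate distribution supported on the coordinates maximizing $w$, and that this identifies the degenerate part with the sphere at infinity $S^{k-2}$ of $V/\R\mathbf 1$. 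The technical statement controlling these limits and the face structure is exactly the result of Section~\ref{sec:affine-space}, which I would invoke here; this is the step I expect to be the main obstacle, and it is where cancellativity is essential, keeping the lattice/face picture in~\eqref{eq:lattice-Lambda} clean. Assembling the open disk (main part) with its boundary sphere (degenerate part) yields the closed disk $D^{k-1}$ with $\partial D^{k-1}$ the degenerate part, as claimed; the group case of Theorem~\ref{th:abel-top-group} is recovered since a group is never a branching semigroup, so its absolute is a disk of dimension $r=\rank G$.
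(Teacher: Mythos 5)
Your proposal is correct and follows essentially the same route as the paper: reduce to the set of precentral distributions via Theorems~\ref{th:Levyprocess-AM} and~\ref{pr:Levyprocess-equations}, log-linearize on the open simplex to get the open disk and the dimension count from the rank of the relation lattice (with the branching/admissible dichotomy governed by whether the length functional vanishes on the relation lattice), and delegate the closed-disk/boundary structure to Proposition~\ref{pr:A7} of Section~\ref{sec:affine-space}, exactly as the paper does (the paper phrases that step via the quotient projection $\rho\colon\R^S\to\R^S/K$ rather than rays to infinity, but the content is the same). The only point worth making explicit is the paper's observation~(ii) that for cancellative $G$ the reduced centrality equations cut out the same set as the original ones on all of $\Delta_S$, not just its interior, which is what licenses identifying $\sch$ with $\Lambda_{\bRc_S(G)}$ before applying Proposition~\ref{pr:A7}.
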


In the case of a noncancellative commutative semigroup, the absolute can have a more complex structure (see Example~4 above). However, it is still compact, and the main part has the same form.

\begin{theorem}[(on the topology of the absolute of  noncancellative commutative semigroups)]
\label{th:abel-top-semi}
The absolute of an arbitrary finitely generated commutative semigroup
{\rm(}with respect to any finite system of generators{\rm)} is compact\footnote{In the literature, a simplex whose Choquet boundary is closed is called a \emph{Bauer} simplex.}, and its main part is homeomorphic to the open disk whose dimension is determined by the rule described in Theorem~{\rm\ref{th:abel-top-cancel}}.
\end{theorem}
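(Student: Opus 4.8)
The plan is to reduce the whole statement, exactly as in the cancellative case, to the analysis of the set $\sch$ of precentral distributions inside the simplex $\Delta_S\cong\mathscr{I}_S(G)$, using Theorems~\ref{th:Levyprocess-AM} and~\ref{pr:Levyprocess-equations}. First I would record that the identification $\mathscr{I}_S(G)\cong\Delta_S$ is a homeomorphism for the weak topology: the measure of any cylinder is a monomial in the numbers $\mu(s)$, hence depends continuously on $\mu\in\Delta_S$, while each $\mu(s)$ is read off from the length-one cylinders. Thus $\mathscr{A}_S(G)$ is homeomorphic to $\sch$, and compactness is immediate: by Theorem~\ref{pr:Levyprocess-equations}, $\sch$ is the solution set, in the compact simplex $\Delta_S$, of the family of centrality equations~\eqref{eq:central-pr-2-compact}, each a continuous (monomial) equality; so $\sch$ is closed in $\Delta_S$, hence compact.

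Next I would treat the main part. It corresponds to $\sch\cap\inr\Delta_S$, i.e.\ to precentral $\mu$ with $\mu(s)>0$ for all $s$: a path has positive measure iff $\mu$ is positive on every generator occurring along it, and each generator labels an edge issuing from the distinguished vertex, so nondegeneracy is equivalent to $\mu\in\inr\Delta_S$. On $\inr\Delta_S$ I pass to logarithms $t_s=\log\mu(s)$; then each equation~\eqref{eq:central-pr-2-compact} becomes the linear equation $\langle m-n,\,t\rangle=0$. Hence $\mu\in\sch\cap\inr\Delta_S$ iff $t\in L:=\Lambda^{\perp}$, where $\Lambda\subseteq\Z^S$ is the subgroup generated by all difference vectors $m-n$ of pairs satisfying~\eqref{eq:central-22}. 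Since every such pair has $\sum_s(m_s-n_s)=0$, the all-ones vector $\mathbf 1$ lies in $L$; therefore $\exp(L)\subseteq\R^S$ is a positive cone, and normalization identifies $\exp(L)\cap\Delta_S$ with $\exp(L)/\R_{>0}\cong L/\R\mathbf 1\cong\R^{\dim L-1}$. So the main part is an open disk of dimension $\dim L-1=|S|-\rank\Lambda-1$, and it is nonempty because the uniform distribution ($t=0$) is always precentral.

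It remains to compute $\rank\Lambda$ in terms of the group of fractions $F$ of $G$. Let $\pi\colon\Z^S\to F$ be the homomorphism $e_s\mapsto s$, put $K=\ker\pi$, and let $H=\ker(\deg)$ be the augmentation kernel, $\deg(v)=\sum_s v_s$. I claim $\Lambda=K\cap H$. The inclusion $\Lambda\subseteq K\cap H$ is clear. For the reverse — and this is the one genuinely new point compared with Theorem~\ref{th:abel-top-cancel} — take $d\in K\cap H$ and write $d=m'-n'$ with $m',n'\in\N_0^S$ of disjoint support; then $\sum m'_s=\sum n'_s$ and $\sum m'_s s=\sum n'_s s$ in $F$. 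In the noncancellative case this last equality need hold only in $F$, not in $G$; but equality in the group of fractions means there is a word $w=\sum p_s s$ with $\sum m'_s s+w=\sum n'_s s+w$ in $G$. Then the pair $(m'+p,\,n'+p)$ satisfies~\eqref{eq:central-22} and has difference $d$, so $d\in\Lambda$. Hence $\rank\Lambda=\rank(K\cap H)$. I expect this realizability step to be the main obstacle, since it is exactly where noncancellativity enters: a relation valid in $F$ is made realizable in $G$ by padding with a common word, which does not change the difference vector.

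Finally I would conclude the dimension dichotomy. If $K\subseteq H$, then $K\cap H=K$ and $\rank\Lambda=\rank K=|S|-\rank F$, giving $\dim=\rank F-1$; this is precisely the branching case with admissible generators, in which all relations of $F$ preserve word length. Otherwise $\deg$ is nontrivial on $K$, so $\rank(K\cap H)=\rank K-1$ and $\dim=\rank F$. This reproduces the rule of Theorem~\ref{th:abel-top-cancel}. I would stress that we do not claim the whole absolute is a disk — Examples~6 and~7 show that the degenerate part can turn it into a tripod or make it disconnected — so the assertion is exactly compactness together with the open-disk structure of the main part, which is what this log-linear argument delivers; everything beyond the identity $\Lambda=K\cap H$ is already visible in the cancellative proof.
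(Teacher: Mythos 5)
Your proposal is correct and follows essentially the same route as the paper: reduction to the set $\sch\subseteq\Delta_S$ via Theorems~\ref{th:Levyprocess-AM} and~\ref{pr:Levyprocess-equations}, compactness from closedness of the solution set of the centrality equations, and the $\log$-linearization of the equations on $\inr(\Delta_S)$ (this is exactly the paper's Claim~\ref{cl:A2}, applied to the group $\bRc_S(G)=K\cap H$ of central reduced relation vectors), together with the same rank dichotomy according to whether all relations preserve word length. Your explicit ``padding'' verification that every vector of $K\cap H$ is realized by a pair satisfying~\eqref{eq:central-22} in $G$ itself is a worthwhile elaboration of a step the paper treats as immediate, but it does not change the argument.
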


\begin{proof}[Proof of Theorems~{\rm\ref{th:abel-top-group}--\ref{th:abel-top-semi}}]
Let $G$ be a commutative semigroup with a finite system of generators~$S$.
Theorems~\ref{th:Levyprocess-AM} and~\ref{pr:Levyprocess-equations} reduce the problem of describing the absolute~$\mathscr{A}_S(G)$ to that of describing the set~$\sch$ of solutions of equations~\eqref{eq:central-pr-2-compact} in the simplex~$\Delta_S$ of probability distributions on~$S$.

{\it A proof that the absolute is compact.} For arbitrary pair of vectors $(m_s)_{s\in S}$ and $(n_s)_{s\in S}$ from $\N_0^S$, the subset of solutions of equation~\eqref{eq:central-pr-2-compact} in~$\Delta_S$ is compact, since $\Delta_S$ is compact and the expressions in both sides of~\eqref{eq:central-pr-2-compact} are continuous on~$\Delta_S$ as functions of~$\mu$. Therefore, the set~$\sch$ (and hence the absolute~$\mathscr{A}_S(G)$) is compact as an intersection of compact subsets.

{\it The centrality equations.}
If vectors $(m_s)_{s\in S}$ and $(n_s)_{s\in S}$ from $\N_0^S$
satisfy the condition $\sum_{s\in S} m_s\cdot s=\sum_{s\in S} n_s \cdot s$ in~$G$,
then the vector $(m_s-n_s)_{s\in S}$ from $\Z^S$ will be called a \emph{reduced relation vector} for~$(G,S)$.
If, in addition, the equation $\sum_{s\in S} m_s=\sum_{s\in S} n_s$ holds, then the reduced relation vector
 $(m_s-n_s)_{s\in S}$ will be called \emph{central},
equation~\eqref{eq:central-pr-2-compact} will be called a \emph{centrality equation}, and the equation (in the variables $\mu(s)$)
\be
\label{eq:central-reduced}
\prod_{s\in S}(\mu(s))^{m_s-\min\{m_s,n_s\}}=\prod_{s\in S}(\mu(s))^{n_s-\min\{m_s,n_s\}}
\ee
will be called a \emph{reduced centrality equation}.\footnote{In the general case of a  noncancellative semigroup, a reduced centrality equation is not necessarily a centrality equation.} Denote by~$\bsch$ the set of distributions from~$\Delta_S$ that are solutions of all reduced centrality equations of the pair~$(G,S)$. Then the following holds.

(i) {\it The set $\bsch$ is contained in $\sch$}, since $\sch$ coincides with the set of solutions of the centrality equations lying in~$\Delta_S$, and every centrality equation~\eqref{eq:central-pr-2-compact} can be obtained from the corresponding reduced equation~\eqref{eq:central-reduced} by multiplying both sides of the latter by $\prod_{s\in S}(\mu(s))^{\min\{m_s,n_s\}}$.

(ii) {\it If $G$ is a  cancellative semigroup, then $\bsch=\sch$}, since in such a semigroup every reduced centrality equation is, obviously, a centrality equation, so $\sch$ is contained in~$\bsch$ (and $\bsch$ is contained in $\sch$ by~(i)).

(iii) {\it The intersection $\sch\cap\inr(\Delta_S)$ coincides with the intersection of the sets $\bsch$ and $\inr(\Delta_S)$, both in the case of a  cancellative semigroup and in the general case}, since in~$\inr(\Delta_S)$ the condition $\mu(s)>0$ holds for all $s\in S$, and a centrality equation in~$\inr(\Delta_S)$ has the same set of solutions as the corresponding reduced centrality equation.

Now observe that the set~$\bRr_S(G)$ of reduced relation vectors for~$(G,S)$ is, obviously, a subgroup in~$\Z^S$,
and the subset~$\bRc_S(G)$ of all central relation vectors either coincides with~$\bRr_S(G)$, or is a subgroup in~$\bRr_S(G)$ of corank~$1$.\footnote{If the group of fractions of a semigroup is torsion-free, then the subgroups~$\bRr_S(G)$ and~$\bRc_S(G)$ are linear subspaces in~$\Z^S$.}

It follows that the sets~$\bRc_S(G)$ and $\bsch$ satisfy the conditions of Proposition~\ref{pr:A7} (in the notation of this proposition, 
$\bsch=\Lambda_{\bRc_S(G)}$), which implies that the set~$\bsch$ is homeomorphic to the closed disk of dimension~$|S|-1-\rank(\bRc_S(G))$ (hereafter, $\rank$ stands for the rank of a commutative group), and, moreover, the interior of the disk~$\bsch$ lies in~$\inr(\Delta_S)$, while the boundary of~$\bsch$ lies  in the boundary $\partial \Delta_S$.

To complete the proof of Theorems~\ref{th:abel-top-group}--\ref{th:abel-top-semi}, it remains to observe that under the bijection $\mathscr{A}_S(G)\cong\sch$, the main part of the absolute is represented by the intersection $\sch\cap\inr(\Delta_S)$,
the rank of the group of fractions of the semigroup~$G$ is equal to $|S|-\rank(\bRr_S(G))$, and the rank of the group $\bRr_S(G)$ coincides with the rank of~$\bRc_S(G)$ if and only if (these groups coincide, so)  $(G,S)$ is a branching semigroup with an admissible system of generators.
\end{proof}

\section{A proposition on linear spaces}
\label{sec:affine-space}

Let $S$ be a finite set,
$\Delta_S$ be the simplex of probability distributions on~$S$,
$\R^S$ be the space of real functions on~$S$,
$\VV\subset \R^S$ be the subset of functions with values summing to zero. We identify
 $\Delta_S$ with the subset of nonnegative functions in~$\R^S$ with values summing to one.
Given a vector~$\kappa=(k_s)_{s\in S}$ from $\VV$, denote by~$\lambda_\kappa$ the subset in~$\Delta_S$ consisting of all distributions~$\mu$ satisfying the condition
\be
\label{def:central-cond-abs}
\prod_{s\in S:~k_s>0}(\mu(s))^{k_s}=\prod_{r\in S:~k_r<0}(\mu(r))^{|k_r|}.
\ee
Given a subset\footnote{If we extend the above definitions to subsets~$K$ not in~$\VV$, then part of the assertions stated below remains valid. However, here we do not consider generalizations; we are mainly interested in the simplex~$\Delta_S$ and the space~$\VV$, as the vector space associated with the affine hull of~$\Delta_S$; these spaces are embedded into the auxiliary space~$\R^S$ only for convenience.} $K$ in~$\VV$, put
$$
\Lambda_K:=\bigcap_{\kappa\in K}\lambda_\kappa.
$$
By $V_K$ we denote the linear hull of~$K$, and $\dim(V)$ stands for the dimension of a space~$V$. We also denote by
 $\inr(M)$ and $\dd M$ the interior and the boundary of a multidimensional polyhedron~$M$ (irrespective of the ambient space).

The purpose of this section is to prove the following proposition.

\begin{proposition}
\label{pr:A7}
If a subset $K$ of a hyperplane~$\VV$ is a linear subspace or a semigroup of vectors with integer coordinates, then the set~$\Lambda_K$ is homeomorphic to the closed disk of dimension~$|S|-1-\dim(V_K)$; moreover, the interior of~$\Lambda_K$ lies in~$\inr(\Delta_S)$, and the boundary of~$\Lambda_K$ lies in~$\partial \Delta_S$.
\end{proposition}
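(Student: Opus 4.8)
The plan is to realize $\Lambda_K$ as the image of the whole simplex under a single linear projection and to show that this projection restricts to a homeomorphism on $\Lambda_K$. Concretely, let $\pi\colon\R^S\to\R^S/V_K$ be the quotient map. Since $V_K\subseteq\VV$ is contained in the tangent space of the affine hull of $\Delta_S$, the kernel of $\pi$ restricted to $\VV$ is exactly $V_K$, so $P:=\pi(\Delta_S)$ is a convex polytope of dimension exactly $|S|-1-\dim(V_K)$; any compact convex body of that dimension is homeomorphic to the closed disk of the same dimension. It therefore suffices to prove that $\pi|_{\Lambda_K}\colon\Lambda_K\to P$ is a bijection: being a continuous map from a compact space ($\Lambda_K$ is closed in the compact $\Delta_S$) to a Hausdorff space, it will then automatically be a homeomorphism.

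For the interior I would pass to logarithms. On $\inr(\Delta_S)$ the defining condition \eqref{def:central-cond-abs} is equivalent to $\langle\kappa,\log\mu\rangle=0$, so $\Lambda_K\cap\inr(\Delta_S)$ is the log-linear (exponential) family $\mathcal M=\{\mu\in\inr(\Delta_S):\log\mu\perp V_K\}$. Injectivity of $\pi$ on $\mathcal M$ is immediate from strict convexity: if $\mu,\nu\in\mathcal M$ and $\pi\mu=\pi\nu$, then $\mu-\nu\in V_K$ while $\log\mu-\log\nu\in V_K^{\perp}$, hence $0=\langle\mu-\nu,\log\mu-\log\nu\rangle=\sum_s(\mu_s-\nu_s)(\log\mu_s-\log\nu_s)$, and since each summand is nonnegative, vanishing only when $\mu_s=\nu_s$, we get $\mu=\nu$. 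Surjectivity onto $\inr(P)$ is the mean-value parametrization of exponential families (Birch's theorem): the gradient of the log-partition function is a diffeomorphism from the natural parameter space onto $\inr(P)$. Equivalently, writing $V_K^{\perp}=\langle\mathbf 1\rangle\oplus W'$ with $\mathbf 1$ the all-ones vector (note $\mathbf 1\in V_K^{\perp}$ because $V_K\subseteq\VV$), so that $\dim W'=|S|-1-\dim(V_K)$, one checks that the softmax map $W'\to\mathcal M$ is a homeomorphism; this identifies $\mathcal M$ with $\R^{\,|S|-1-\dim(V_K)}$ and with $\inr(P)$, the open disk of the required dimension.

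The main work, and the step I expect to be the main obstacle, is the boundary. I would stratify $\dd\Delta_S$ by its open faces $\Delta_T$, $T\subsetneq S$, and inspect \eqref{def:central-cond-abs} on the relative interior of each $\Delta_T$: a factor $\lambda_\kappa$ is satisfied trivially when both $\{s:k_s>0\}$ and $\{s:k_s<0\}$ meet $S\sm T$, excludes the face entirely when exactly one of them does, and reduces to an honest log-linear constraint supported on $T$ when both lie in $T$. The point to extract from this bookkeeping is that $\Lambda_K=\clos(\mathcal M)$ and that $\pi$ remains bijective up to the boundary, matching $\Lambda_K\cap\dd\Delta_S$ with $\dd P$. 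This is exactly where the hypothesis on $K$ enters and splits into the two announced cases: for a linear subspace the relevant relations are governed by $V_K$ directly, whereas for an integer semigroup it is the toric (binomial) structure that guarantees that the equations cut out on each face precisely the closure of the positive part — this is the subtlety flagged in the footnote that a reduced centrality equation need not be a centrality equation. Injectivity on $\dd\Delta_S$ I would obtain by extending the strict-convexity argument above to the negative-entropy potential $\sum_s\mu_s\log\mu_s$ through its subdifferential, or by an induction on faces; surjectivity onto $\dd P$ follows by approximating boundary points of $P$ by points of $\pi(\mathcal M)$ and passing to limits.

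Assembling these pieces, $\pi|_{\Lambda_K}$ is a continuous bijection onto the $(|S|-1-\dim(V_K))$-dimensional polytope $P$, hence a homeomorphism, and under it $\mathcal M=\Lambda_K\cap\inr(\Delta_S)$ corresponds to $\inr(P)$ while $\Lambda_K\cap\dd\Delta_S$ corresponds to $\dd P$. This gives the asserted homeomorphism with the closed disk, together with the claim that the interior of $\Lambda_K$ lies in $\inr(\Delta_S)$ and its boundary in $\dd\Delta_S$. The delicate part throughout is the face-by-face verification that the multiplicative equations \eqref{def:central-cond-abs}, for both admitted types of $K$, describe exactly $\clos(\mathcal M)$ and not some larger set, since the equations degenerate on $\dd\Delta_S$.
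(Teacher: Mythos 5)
Your overall skeleton is the same as the paper's: project along $V_K$ onto the polytope $P=\pi(\Delta_S)$, prove that $\pi|_{\Lambda_K}$ is a continuous bijection onto $P$, and invoke compactness to upgrade it to a homeomorphism. Your treatment of the interior is correct and matches the paper's Claim~\ref{cl:A2} (the $\log$/$\exp$ change of coordinates; your strict-convexity pairing $\sum_s(\mu_s-\nu_s)(\log\mu_s-\log\nu_s)=0$ and Birch's theorem are legitimate substitutes for the paper's orthogonal-complement description and its invariance-of-domain argument in Claim~\ref{cl:A6.3}). But everything you correctly identify as ``the main obstacle'' is left as a plan, not a proof, and that is where the entire content of the proposition lives. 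Three things are missing. (a)~Injectivity of $\pi$ on $\Lambda_K$ when at least one of the two colliding points lies on $\dd\Delta_S$: your convexity argument needs $\langle\log\nu,\kappa\rangle=0$ for $\kappa\in V_K$, which is meaningless when $\nu$ has zero coordinates, and a boundary point of $\Lambda_K$ satisfies the multiplicative equations~\eqref{def:central-cond-abs} possibly only because both sides vanish, so it carries no log-linear information; neither the ``subdifferential of negative entropy'' nor ``induction on faces'' is carried out, and it is not evident either works. (b)~The fact that $\Lambda_K\cap\dd\Delta_S$ cannot map to an \emph{interior} point of $P$ (needed so that interior and boundary strata do not collide and so that your limit argument for surjectivity onto $\dd P$ lands where you want it). (c)~The identification $\Lambda_K=\clos(\mathcal M)$, i.e., that the equations do not cut out extra components on the faces --- precisely the failure mode exhibited by the paper's closing Remark, where $\Lambda_{\{(1,1,-2),(1,-2,1)\}}$ acquires the isolated point $(1,0,0)$.

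The single missing idea that resolves (a), (b), and (c) simultaneously is the paper's Claim~\ref{cl:A5}: if a line $L$ meets $\Delta_S$ in a chord with endpoints $a,b$, then along the chord the left side of~\eqref{eq:Lambda_ab} for $\kappa=a-b$ is decreasing and vanishes only at $b$, while the right side is increasing and vanishes only at $a$, so $\lambda_{a-b}\cap L$ is exactly one point, interior to the chord. Combined with $\lambda_\kappa=\lambda_{c\kappa}$ (Claim~\ref{cl:A3}), this kills any pair $x\neq y$ in $\Lambda_K$ with $x-y\in V_K$ regardless of whether the points are interior or boundary, and applied to a boundary point $b$ and an interior preimage $y$ of the same fiber it proves (b) (Claims~\ref{cl:A6.1} and~\ref{cl:A6.2}). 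You should also note that reducing the semigroup case to the linear-subspace case (the paper's Claim~\ref{cl:A4}) is itself nontrivial --- it uses density of rational directions in $V_K$ together with continuity of both sides of~\eqref{def:central-cond-abs} only on each stratum where the set of vanishing coordinates is fixed --- whereas you gesture at ``toric structure'' without an argument. As written, the proposal establishes the open-disk statement for $\Lambda_K\cap\inr(\Delta_S)$ but not the proposition.
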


We split the proof of Proposition~\ref{pr:A7} into a series of claims.

\begin{claim}
\label{cl:A2}
For every subset $K$ in~$\VV$, the intersection of the set $\Lambda_K$ with the interior $\inr(\Delta_S)$ of the simplex~$\Delta_S$ is homeomorphic to the open disk of dimension~$|S|-1-\dim(V_K)$.
\end{claim}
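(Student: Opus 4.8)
The plan is to linearize the multiplicative relations defining the sets $\lambda_\kappa$ by passing to logarithmic coordinates, which is legitimate precisely on $\inr(\Delta_S)$, where every coordinate $\mu(s)$ is strictly positive. First I would introduce the map $\ell\colon\inr(\Delta_S)\to\R^S$ sending $\mu$ to $x=(\log\mu(s))_{s\in S}$. Its image is the smooth hypersurface
$$
H=\Bigl\{x\in\R^S:\ \sum_{s\in S}e^{x_s}=1\Bigr\},
$$
and $\ell$ is a homeomorphism onto $H$, with continuous inverse $x\mapsto(e^{x_s})_{s\in S}$. Taking logarithms of the defining condition~\eqref{def:central-cond-abs} and using $|k_r|=-k_r$ for the negative coordinates, the condition cutting out $\lambda_\kappa$ collapses to the single linear equation $\sum_{s\in S}k_s x_s=0$, i.e.\ $\langle\kappa,x\rangle=0$. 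Hence $\ell$ carries $\Lambda_K\cap\inr(\Delta_S)$ homeomorphically onto $H\cap V_K^{\perp}$, where $V_K^{\perp}$ denotes the orthogonal complement in $\R^S$ of the linear hull $V_K$ (orthogonality to $K$ being the same as orthogonality to its span).

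It then remains to show that $H\cap W$, with $W:=V_K^{\perp}$, is homeomorphic to an open disk of dimension $\dim(W)-1=|S|-1-\dim(V_K)$. The key structural observation is that the all-ones vector $\mathbf{1}=(1,\dots,1)$ lies in $W$: since $K\subset\VV=\mathbf{1}^{\perp}$, we have $V_K\perp\mathbf{1}$, and therefore $\mathbf{1}\in V_K^{\perp}=W$. Writing $F(x)=\log\sum_{s\in S}e^{x_s}$, so that $H=F^{-1}(0)$, one has the exact translation identity $F(x+t\mathbf{1})=F(x)+t$ for all $t\in\R$. Thus $F$ is strictly increasing along the direction $\mathbf{1}$, and every affine line $x+\R\mathbf{1}$ — which stays inside $W$ because $\mathbf{1}\in W$ — meets $H$ in exactly one point.

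To finish, I would fix a linear complement $U$ of $\R\mathbf{1}$ inside $W$, so that $\dim(U)=\dim(W)-1=|S|-1-\dim(V_K)$, and exhibit the explicit homeomorphism
$$
U\longrightarrow H\cap W,\qquad u\longmapsto u-F(u)\,\mathbf{1},
$$
whose continuous inverse is the restriction to $H\cap W$ of the linear projection of $W$ onto $U$ along $\R\mathbf{1}$. The identity $F\bigl(u-F(u)\mathbf{1}\bigr)=F(u)-F(u)=0$ shows the map lands in $H$, and the translation identity shows the two composites are the identity. Since $U$ is a Euclidean space of dimension $|S|-1-\dim(V_K)$, hence an open disk of that dimension, the claim follows.

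The main obstacle — indeed the only delicate point — is the passage to logarithmic coordinates: it is valid only on the open part $\inr(\Delta_S)$ (on $\partial\Delta_S$ some $\mu(s)$ vanish and the linearization breaks down), and one must verify carefully that the split form of~\eqref{def:central-cond-abs} becomes exactly $\langle\kappa,x\rangle=0$ after taking logs. Everything afterwards is formal: the identification of the image $H$, the appearance of $\mathbf{1}$ forced by $K\subset\VV$, and the graph-over-$U$ parametrization. Note that this argument uses no hypothesis on $K$ beyond $K\subset\VV$; the extra assumption in Proposition~\ref{pr:A7} that $K$ be a subspace or an integral semigroup should only enter later, to control the behaviour on $\partial\Delta_S$ and glue the open disk to its boundary.
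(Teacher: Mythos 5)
Your proof is correct and follows essentially the same route as the paper: both arguments linearize the multiplicative conditions~\eqref{def:central-cond-abs} by taking logarithms on $\inr(\Delta_S)$ and identify the result with a linear space of dimension $|S|-1-\dim(V_K)$. The only difference is cosmetic --- the paper uses the normalized $\log$/$\exp$ pair to land directly in the orthogonal complement of $V_K$ inside $\VV$, whereas you use the unnormalized logarithm onto the hypersurface $\sum_s e^{x_s}=1$ and then flatten it explicitly along the direction $\mathbf{1}$; your version spells out a step the paper leaves implicit.
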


\begin{proof}
Observe that the maps
$$\exp\colon \VV\to\inr(\Delta_S)\qquad\text{and}\qquad\log\colon \inr(\Delta_S)\to \VV$$
given by the formulas
$$
\exp((v_s)_{s\in S})=\left(\frac{e^{v_s}}{\sum_{s\in S}e^{v_s}}\right)_{s\in S}~~~~\text{and}
$$
$$
\log((p_s)_{s\in S})=\left(\log p_s-\sum_{s\in S} \log p_s \right)_{s\in S}
$$
are mutually inverse diffeomorphisms between  $\VV$ and~$\inr(\Delta_S)$. Taking the logarithms of both sides of~\eqref{def:central-cond-abs}, we see that 
\emph{the maps $\exp$ and~$\log$ provide a diffeomorphism between the set
$\Lambda_K\cap\operatorname{int}(\Delta_S)$
and the orthogonal complement to~$V_K$ in~$\VV$}. It remains to observe that this complement has dimension
$$\dim (\VV)-\dim(V_K)=|S|-1-\dim(V_K).\qedhere$$
\end{proof}

\begin{claim}
\label{cl:A1}
For every subset~$K$ in~$\VV$, the set~$\Lambda_K$ is compact.
\end{claim}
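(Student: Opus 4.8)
The plan is to exhibit $\Lambda_K$ as a closed subset of the compact simplex $\Delta_S$, and then invoke the standard fact that a closed subset of a compact space is compact. First I would record that $\Delta_S$ itself is compact, being a closed and bounded subset of~$\R^S$.

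Next, for a fixed vector $\kappa=(k_s)_{s\in S}\in K$, I would look at the two maps
$$
\mu\longmapsto\prod_{s\in S:~k_s>0}(\mu(s))^{k_s}
\qquad\text{and}\qquad
\mu\longmapsto\prod_{r\in S:~k_r<0}(\mu(r))^{|k_r|}
$$
on~$\Delta_S$. Every factor appearing here has the form $x\mapsto x^{a}$ with a \emph{strictly positive} exponent $a$ (namely $a=k_s>0$ on the left and $a=|k_r|>0$ on the right), and such a map is continuous on $[0,1]$, taking the value~$0$ at $x=0$. Hence both products are continuous real-valued functions of~$\mu$ on all of~$\Delta_S$, the boundary included, and so is their difference. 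By definition, $\lambda_\kappa$ is precisely the zero set of this difference, i.e.\ the preimage of $\{0\}$ under a continuous map; therefore $\lambda_\kappa$ is closed in~$\Delta_S$, and as a closed subset of a compact space it is compact. Finally, $\Lambda_K=\bigcap_{\kappa\in K}\lambda_\kappa$ is an intersection of closed subsets of~$\Delta_S$, hence closed in~$\Delta_S$ and therefore compact. This is the same mechanism already used for the compactness of~$\sch$ in the proof of Theorems~\ref{th:abel-top-group}--\ref{th:abel-top-semi}.

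The one point that genuinely requires care---and which I would flag as the only real obstacle---is the continuity of the defining products at the boundary $\partial\Delta_S$, where some coordinates $\mu(s)$ vanish. This is exactly where the particular form of condition~\eqref{def:central-cond-abs} matters: by collecting the positive exponents on one side and the negative exponents (as absolute values) on the other, one ensures that only exponents $x^{a}$ with $a>0$ occur, each of which extends continuously to $x=0$ with value~$0$; no factor of the singular type $x^{-|k|}$, which would blow up near the boundary, ever appears. Once this continuity across all of~$\Delta_S$ is established, the rest of the argument is purely formal.
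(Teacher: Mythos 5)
Your proof is correct and follows essentially the same route as the paper's: both arguments show that each $\lambda_\kappa$ is compact because both sides of~\eqref{def:central-cond-abs} are continuous on all of~$\Delta_S$ (you simply make explicit why the positivity of the exponents guarantees continuity at the boundary), and then conclude that $\Lambda_K$ is compact as an intersection of compact (closed) subsets of the compact simplex.
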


\begin{proof}
For every vector~$\kappa=(k_s)_{s\in S}$, the set~$\lambda_\kappa$ is compact, since
$\Delta_S$ is compact and the expressions in both sides of~\eqref{def:central-cond-abs} are continuous on~$\Delta_S$ as functions of~$\mu$. Therefore, $\Lambda_K$ is compact as an intersection of compact sets.
\end{proof}

\begin{claim}
\label{cl:A3}
If vectors~$\kappa$ and $\kappa'$ in~$\VV$ are collinear, then $\lambda_\kappa=\lambda_{\kappa'}$.
\end{claim}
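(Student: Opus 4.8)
The plan is to reduce the claim to the elementary fact that the power map $t\mapsto t^{c}$ is a strictly increasing bijection of $[0,\infty)$ for every $c>0$. Since $\kappa$ and $\kappa'$ are collinear, I may write $\kappa'=c\,\kappa$ for some nonzero real scalar $c$ (the degenerate case $\kappa=\kappa'=0$ is trivial, as then both sets equal $\Delta_S$). Writing $\kappa=(k_s)_{s\in S}$, I will view the defining condition~\eqref{def:central-cond-abs} for $\lambda_\kappa$ as an equality $A_\kappa(\mu)=B_\kappa(\mu)$ of two nonnegative quantities, where $A_\kappa(\mu)=\prod_{s:\,k_s>0}(\mu(s))^{k_s}$ and $B_\kappa(\mu)=\prod_{r:\,k_r<0}(\mu(r))^{|k_r|}$. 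Note that every exponent occurring here is a positive real, so each factor is continuous on $[0,\infty)$ with value $0$ when $\mu(s)=0$.

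First I would treat the case $c>0$. Then $k'_s=c\,k_s$ has the same sign as $k_s$ for every $s$, so the index sets of positive and of negative coordinates of $\kappa'$ coincide with those of $\kappa$, and $A_{\kappa'}(\mu)=A_\kappa(\mu)^{c}$ while $B_{\kappa'}(\mu)=B_\kappa(\mu)^{c}$. Hence membership in $\lambda_{\kappa'}$ is the condition $A_\kappa(\mu)^{c}=B_\kappa(\mu)^{c}$. Since $A_\kappa(\mu),B_\kappa(\mu)\ge 0$ and $t\mapsto t^{c}$ is injective on $[0,\infty)$, this is equivalent to $A_\kappa(\mu)=B_\kappa(\mu)$, i.e.\ to membership in $\lambda_\kappa$; thus $\lambda_{\kappa'}=\lambda_\kappa$.

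For $c<0$ the sign of every coordinate is reversed, so the roles of the two index sets are interchanged: $A_{\kappa'}(\mu)=B_\kappa(\mu)^{|c|}$ and $B_{\kappa'}(\mu)=A_\kappa(\mu)^{|c|}$. The condition for $\lambda_{\kappa'}$ then reads $B_\kappa(\mu)^{|c|}=A_\kappa(\mu)^{|c|}$, which by the same injectivity of $t\mapsto t^{|c|}$ on $[0,\infty)$ is again equivalent to $A_\kappa(\mu)=B_\kappa(\mu)$. This disposes of both cases and proves the claim.

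The only subtlety, and the point where I expect the argument to need care, is the behavior on the boundary $\partial\Delta_S$, where some coordinates $\mu(s)$ vanish. There one cannot pass to logarithms of~\eqref{def:central-cond-abs} as in the interior (cf.\ the diffeomorphism used in Claim~\ref{cl:A2}), and the genuine content of the claim is precisely that rescaling $\kappa$ does not alter the solution set even on the boundary. The device that handles this uniformly is to argue directly with the monotone bijection $t\mapsto t^{|c|}$ of $[0,\infty)$, which remains valid at $t=0$ (with $0^{|c|}=0$), rather than with $\log$. No convexity or topological input is required; the whole claim is the equivalence of two monomial equations under a positive power.
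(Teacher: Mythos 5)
Your argument is correct and is just the fully written-out version of what the paper dismisses in one line (``follows from the definition''): raising both sides of~\eqref{def:central-cond-abs} to the power $|c|$ and, for $c<0$, swapping the two index sets, with injectivity of $t\mapsto t^{|c|}$ on $[0,\infty)$ covering the boundary of $\Delta_S$ where logarithms fail. No substantive difference from the paper's intended proof.
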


\begin{proof}
Follows from the definition of~$\lambda_\kappa$ for $\kappa$ and $\kappa'$ from~$\VV$. \end{proof}

\begin{claim}
\label{cl:A4}
If $K$  is a subgroup in~$\VV$ that contains only vectors with integer coordinates, then $\Lambda_K=\Lambda_{V_K}$.
\end{claim}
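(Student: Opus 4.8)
The plan is to prove the nontrivial inclusion $\Lambda_K\subseteq\Lambda_{V_K}$, the reverse inclusion $\Lambda_{V_K}\subseteq\Lambda_K$ being immediate from $K\subseteq V_K$ together with the definition of $\Lambda$ as an intersection. So I fix a distribution $\mu\in\Lambda_K$, let $L=\{s\in S:\mu(s)>0\}$ be its support, and aim to verify condition~\eqref{def:central-cond-abs} for every $\kappa=(k_s)_{s\in S}\in V_K$. The verification splits, according to the restriction $\kappa|_{S\setminus L}$, into three cases: (I) $\kappa|_{S\setminus L}=0$, i.e.\ $\kappa$ is supported on $L$; (II) $\kappa|_{S\setminus L}$ has both a positive and a negative coordinate; (III) $\kappa|_{S\setminus L}$ is nonzero but of one sign only.

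First I would record a lattice fact underlying everything. Since $K$ is a subgroup of $\Z^S$, it is free of finite rank; its $\R$-span $V_K$ is a \emph{rational} subspace (spanned by integer vectors); $K$ is a lattice of full rank $\dim V_K$ in $V_K$; and hence $K$ has finite index in $V_K\cap\Z^S$. From this I get, for any coordinate subspace $\R^L$, that $\operatorname{span}_\R(K\cap\R^L)=V_K\cap\R^L$: the inclusion $\subseteq$ is clear, and for $\supseteq$ every integer vector $w\in(V_K\cap\R^L)\cap\Z^S$ has a multiple $mw\in K$ lying in $\R^L$, and such $w$ span the rational subspace $V_K\cap\R^L$. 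This settles case (I): here $\kappa\in V_K\cap\R^L$, so $\kappa=\sum_i c_i\kappa_i$ with $c_i\in\R$ and $\kappa_i\in K\cap\R^L$; as $\mu(s)>0$ on $L$ I may take logarithms, and the equalities~\eqref{def:central-cond-abs} for the $\kappa_i$ (valid because $\mu\in\Lambda_K$) read $\sum_{s\in L}(\kappa_i)_s\log\mu(s)=0$; multiplying the $i$-th equality by $c_i$ and summing yields $\sum_{s\in L}k_s\log\mu(s)=0$, which exponentiates back to~\eqref{def:central-cond-abs} for $\kappa$. Case (II) is immediate: if $\kappa$ is positive at some $s\in S\setminus L$ and negative at some $r\in S\setminus L$, then $\mu(s)=\mu(r)=0$ makes \emph{both} sides of~\eqref{def:central-cond-abs} equal to $0$.

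The hard case, and the main obstacle, is (III): I must show it \emph{cannot occur} for $\mu\in\Lambda_K$. Suppose, say, $\kappa|_{S\setminus L}\ge 0$ with $\kappa_{s_0}>0$ for some $s_0\in S\setminus L$ (the case $\le 0$ is symmetric). Consider the rational polyhedral cone $C=\{v\in V_K:v_s\ge 0\text{ for all }s\in S\setminus L\}$, whose lineality space is $U=V_K\cap\R^L$; then $\kappa\in C\setminus U$, so $C\not\subseteq U$. A rational polyhedral cone is the set of nonnegative combinations of finitely many rational vectors; since $C\not\subseteq U$, one of these generators lies in $C\setminus U$ and is rational. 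Clearing denominators and then multiplying by the index $[\,V_K\cap\Z^S:K\,]$ produces an integer vector $\kappa'\in K$ lying in $C\setminus U$, i.e.\ $\kappa'|_{S\setminus L}\ge 0$ with some $\kappa'_{s_0}>0$, $s_0\in S\setminus L$.

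For this $\kappa'$ the left-hand product in~\eqref{def:central-cond-abs} contains the factor $\mu(s_0)^{\kappa'_{s_0}}=0$, so the left side is $0$; meanwhile every index where $\kappa'$ is negative lies in $L$ (as $\kappa'\ge 0$ off $L$), where $\mu>0$, so the right side is strictly positive. Thus $\mu\notin\lambda_{\kappa'}$, contradicting $\mu\in\Lambda_K\subseteq\lambda_{\kappa'}$. Hence (III) is vacuous, and together with (I)--(II) this gives $\mu\in\lambda_\kappa$ for all $\kappa\in V_K$, i.e.\ $\mu\in\Lambda_{V_K}$. I expect the delicate point to be exactly this passage from a real vector of $V_K$ with a prescribed one-sided sign pattern on $S\setminus L$ to an integer vector of the subgroup $K$ with the same pattern; it is what forces the use of the rationality of $V_K$, the finite index of $K$ in $V_K\cap\Z^S$, and the finite rational generation of the cone $C$, rather than a naive coordinatewise approximation, which could spoil the nonstrict inequalities at the vanishing coordinates.
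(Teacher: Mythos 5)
Your proof is correct, but it takes a genuinely different route from the paper's. The paper argues by density and continuity: every rational vector of~$V_K$ is proportional to a vector of~$K$, so by Claim~\ref{cl:A3} condition~\eqref{def:central-cond-abs} propagates from~$K$ to all rational vectors of~$V_K$; for fixed~$\mu$ both sides of~\eqref{def:central-cond-abs} are continuous in~$\kappa$ on each connected component of each stratum~$\R^S_m$ (where the set of vanishing coordinates, hence the sign pattern, is locked); and rational vectors are dense in each stratum~$V_K\cap\R^S_m$ because $V_K$ and the coordinate subspaces are rational. You instead stratify by the sign pattern of~$\kappa$ on the complement of the support~$L$ of~$\mu$: a log-linear argument when $\kappa$ is supported on~$L$, the observation that both sides vanish in the mixed-sign case, and the exclusion of the one-sided case by producing a rational Minkowski--Weyl generator of the cone~$C$ outside its lineality space and scaling it into~$K$ via the finite index of~$K$ in~$V_K\cap\Z^S$. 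Both arguments ultimately rest on the same rationality facts --- that $V_K$ and $V_K\cap\R^L$ are spanned by integer vectors and that rational vectors of~$V_K$ have integer multiples in~$K$; the paper proves the first of these explicitly (via orthogonal complements), whereas you assert it, so you should cite or reprove it. What the paper's route buys is brevity and elementary tools, at the price of the somewhat delicate continuity-on-strata step (the discontinuity of $t\mapsto\mu(s)^t$ at $t=0$ when $\mu(s)=0$ is precisely why the stratification is needed); what your route buys is transparency and an explicit certificate --- whenever $\mu$ would fail to lie in~$\Lambda_{V_K}$ you exhibit a concrete $\kappa'\in K$ already violated by~$\mu$ --- at the price of invoking the Minkowski--Weyl theorem for rational polyhedral cones.
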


\begin{proof}
First, observe that if $K$ is a group of vectors with integer coordinates, then in~$V_K$ every vector with rational coordinates is proportional to a vector from~$K$. In view of Claim~\ref{cl:A3}, it follows that every distribution~$\mu$ from $\Lambda_K$ belongs also to $\lambda_\kappa$ if
$\kappa$ is a vector in~$V_K$ with rational coordinates.

Second, it is clear that for every fixed distribution~$\mu\in\Delta_S$, both sides of~\eqref{def:central-cond-abs},
regarded as functions of a vector $(k_s)_{s\in S}$, are continuous on each connected component of each stratum of the form
$$
\R^S_m:=\{(k_s)_{s\in S}\in\R^S : \card\{s\in S:k_s=0\}=m\}.
$$

Finally, for every $m\in\N_0$, the vectors with rational coordinates are dense in the stratum~$V_K\cap\R^S_m$
(this follows from the fact that the intersection of two linear subspaces spanned by vectors with integer coordinates is also spanned by vectors with integer coordinates; this fact becomes obvious if one regards the intersection as the orthogonal complement to the sum of the orthogonal complements to the original subspaces and observes that the property of being spanned by vectors with integer coordinates is preserved under taking orthogonal complements and sums).
\end{proof}

\begin{claim}
\label{cl:A5}
If an affine line~$L$ in~$\R^S$ intersects the simplex~$\Delta_S$ by an interval~$I$ with endpoints~$a$ and~$b$, then the set~$\lambda_{a-b}$ intersects~$L$ at a single point, and this point lies in the interior of~$I$.
\end{claim}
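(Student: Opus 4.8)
The plan is to reduce the statement to a one–variable calculus problem by parametrizing the line. First I would note that the endpoints satisfy $a,b\in\partial\Delta_S$: an endpoint of $I=L\cap\Delta_S$ lying in $\inr(\Delta_S)$ could be pushed outward along $L$ while staying in $\Delta_S$, contradicting that it is an endpoint. Set $\mu(t)=(1-t)a+tb$ for $t\in\R$, so that $I=\{\mu(t):t\in[0,1]\}$ and $\inr(I)=\{\mu(t):t\in(0,1)\}$, and put $\kappa=a-b=(k_s)_{s\in S}$. Since $a,b\in\Delta_S$ we have $\kappa\in\VV$, and $a\neq b$ forces $\kappa\neq0$, so $\kappa$ has at least one positive and at least one negative coordinate. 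As $\lambda_{a-b}\subseteq\Delta_S$, every point of $\lambda_{a-b}\cap L$ lies in $I$, so it suffices to analyze membership along $\mu(t)$, $t\in[0,1]$.

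On the open interval I would linearize by taking logarithms. For each $s$ with $k_s\neq0$ at least one of $a_s,b_s$ is positive, so $\mu(t)_s>0$ for all $t\in(0,1)$; hence the function
$$g(t):=\sum_{s\in S:\,k_s\neq0}k_s\,\log\mu(t)_s$$
is well defined and smooth on $(0,1)$, and, taking logarithms of~\eqref{def:central-cond-abs}, the point $\mu(t)$ lies in $\lambda_{a-b}$ precisely when $g(t)=0$. Uniqueness is then immediate from monotonicity: since $\tfrac{d}{dt}\mu(t)_s=-k_s$, one computes $g'(t)=-\sum_{s:\,k_s\neq0}k_s^2/\mu(t)_s<0$ on $(0,1)$, so $g$ is strictly decreasing and has at most one zero.

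For existence I would examine the behavior of $g$ near the endpoints, and this is where the extremality of $a$ and $b$ really enters — the main obstacle. The key point is that there must exist a coordinate $s_0$ with $a_{s_0}=0$ and $b_{s_0}>0$: indeed $a\in\partial\Delta_S$ gives some vanishing coordinate, and if every coordinate vanishing at $a$ also vanished at $b$, one could extend $\mu(t)$ to slightly negative $t$ while staying in $\Delta_S$, contradicting that $a$ is an endpoint of $I$. For such $s_0$ we have $k_{s_0}<0$ and $\mu(t)_{s_0}=t\,b_{s_0}\to0^+$, so $k_{s_0}\log\mu(t)_{s_0}\to+\infty$; since every other blowing-up term as $t\to0^+$ arises the same way (coordinates with $a_s=0$, which forces $k_s<0$), I get $g(t)\to+\infty$ as $t\to0^+$. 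The symmetric argument at $b$ yields a coordinate $s_1$ with $k_{s_1}>0$ and $g(t)\to-\infty$ as $t\to1^-$. By the intermediate value theorem $g$ has a zero, necessarily unique and in $(0,1)$, giving the asserted single intersection point in $\inr(I)$.

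Finally I would confirm that the endpoints themselves are not in $\lambda_{a-b}$, so that no intersection is gained on $\partial I$. At $\mu=a$ the coordinate $s_0$ above contributes a zero factor to the right-hand side of~\eqref{def:central-cond-abs} (as $k_{s_0}<0$ and $a_{s_0}=0$), while every factor on the left-hand side has base $a_s>0$ (because $k_s>0$ forces $a_s>b_s\ge0$); hence the two sides equal $0$ and a positive number, so $a\notin\lambda_{a-b}$, and the situation at $b$ is symmetric. Combining the open-interval analysis with this endpoint exclusion shows that $\lambda_{a-b}\cap L$ consists of exactly the single interior point found above.
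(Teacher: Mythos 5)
Your proof is correct and follows essentially the same route as the paper: monotonicity of the centrality condition along the segment from $a$ to $b$, combined with the existence of coordinates $q,r$ satisfying $a_q=0<b_q$ and $b_r=0<a_r$ (which the paper asserts and you justify in detail). The only cosmetic difference is that you pass to logarithms and apply the intermediate value theorem to a single strictly decreasing function $g$, whereas the paper compares the two product sides $F_L$ (decreasing, vanishing at $b$) and $F_R$ (increasing, vanishing at $a$) directly.
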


\begin{proof}
%{\bf (A5)}
For the vector $\kappa=a-b$, condition~\eqref{def:central-cond-abs} takes the form
\be
\label{eq:Lambda_ab}
\prod_{s\in S:~a_s-b_s>0}(\mu(s))^{a_s-b_s}=\prod_{r\in S:~b_r-a_r>0}(\mu(r))^{b_r-a_r}.
\ee
As $\mu$ moves along the interval~$I$ from~$a$ to~$b$, the function~$F_L(\mu)$ given by the left-hand side of~\eqref{eq:Lambda_ab} decreases, while the function~$F_R(\mu)$ given by the right-hand side of~\eqref{eq:Lambda_ab} increases. Since $I$ is cut out of~$\Delta_S$ by a straight line, there is
$q\in S$ with $a_q=0$ and $b_q>0$ and there is $r\in S$ with $a_r>0$ and $b_r=0$
such that $F_L(b)=0$ and $F_R(a)=0$. This immediately implies the desired assertion.
\end{proof}

\begin{claim}
\label{cl:A6}
If $K$ is a linear subspace in~$\VV$, then taking the quotient $\rho\colon \R^S\to\R^S/K$ yields a homeomorphism between the set $\Lambda_K$ and the convex polyhedron~$\rho(\Delta_S)$; moreover, the intersection~$\Lambda_K\cap\partial\Delta_S$ is homeomorphic to the boundary $\partial(\rho(\Delta_S))$.
\end{claim}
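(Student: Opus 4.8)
The plan is to realise $\Lambda_K$ as a section of the linear projection $\rho$ over the polytope $\rho(\Delta_S)$. First I record the elementary facts. Since $\rho$ is linear, $\rho(\Delta_S)$ is the image of a compact convex polytope under a linear map, hence itself a convex polytope; and the fibre of $\rho|_{\Delta_S}$ over a point $y$ is $\Phi_y:=\Delta_S\cap\rho^{-1}(y)$, the intersection of $\Delta_S$ with a coset of $K$, again a compact convex polytope. As $\Lambda_K$ is compact (Claim~\ref{cl:A1}) and $\rho(\Delta_S)$ is Hausdorff, it suffices to show that $\rho|_{\Lambda_K}\colon\Lambda_K\to\rho(\Delta_S)$ is a continuous bijection, for a continuous bijection from a compact space to a Hausdorff space is automatically a homeomorphism. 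Thus the first assertion reduces to proving that each fibre $\Phi_y$ meets $\Lambda_K$ in exactly one point.

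\emph{Injectivity.} Suppose $\mu_1\neq\mu_2$ both lie in $\Lambda_K$ and in a common fibre, so $\mu_1-\mu_2\in K$. Let $L$ be the line through $\mu_1,\mu_2$; it meets $\Delta_S$ in an interval with endpoints $a,b$, and $a-b$ is collinear with $\mu_1-\mu_2$, hence $a-b\in K$. Consequently $\lambda_{a-b}$ is one of the sets intersected to form $\Lambda_K$, so $\mu_1,\mu_2\in\lambda_{a-b}\cap L$. By Claim~\ref{cl:A5} the set $\lambda_{a-b}$ meets $L$ in a single point, forcing $\mu_1=\mu_2$, a contradiction. Hence $\rho|_{\Lambda_K}$ is injective.

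\emph{Surjectivity} (the main step). I would produce a point of $\Lambda_K$ in each fibre as the minimiser of a strictly convex potential. Put $\Psi(\mu):=\sum_{s\in S}\bigl(\mu(s)\log\mu(s)-\mu(s)\bigr)$ with $0\log 0:=0$; this is continuous and strictly convex on $\Delta_S$, with $\nabla\Psi(\mu)=(\log\mu(s))_{s\in S}$ on the interior. Fix a nonempty fibre $\Phi_y$ and let $\mu^*$ be its unique $\Psi$-minimiser. Because $\log t\to-\infty$ as $t\to0^+$, the derivative of $\Psi$ at any relative-boundary point of $\Phi_y$, taken toward a relative-interior point, equals $-\infty$; hence $\mu^*$ lies in the relative interior of $\Phi_y$ and has support equal to the maximal support $T$ attained on $\Phi_y$. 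The first-order condition at this relative-interior minimiser reads $\langle\nabla\Psi(\mu^*),v\rangle=0$ for every $v$ in the tangent space $K\cap\R^T$ of $\Phi_y$; exponentiating, this is precisely $\mu^*\in\lambda_\kappa$ for every $\kappa\in K$ supported on $T$. It remains to treat $\kappa\in K$ whose support meets $S\setminus T$, and here I claim both sides of the equation defining $\lambda_\kappa$ vanish. Indeed, if $\kappa$ had a positive entry outside $T$ but all its negative entries inside $T$, then for small $t>0$ the point $\mu^*+t\kappa$ would remain nonnegative and stay in the coset $\Phi_y$, yet acquire a nonzero coordinate outside $T$, contradicting maximality of $T$; so the positive and negative parts of such $\kappa$ both reach outside $T$, each product in the definition of $\lambda_\kappa$ acquires a zero factor, and $\mu^*\in\lambda_\kappa$. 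Thus $\mu^*\in\Lambda_K$, which gives surjectivity. I expect this support dichotomy, namely showing that $\Lambda_K$ meets even those fibres lying entirely in $\partial\Delta_S$, to be the main obstacle.

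\emph{Boundary correspondence.} Since $\rho$ is affine and $\Delta_S$ convex, $\rho$ carries relative interiors to relative interiors, so $\rho(\inr\Delta_S)=\inr(\rho(\Delta_S))$; in particular $\rho$ maps $\Lambda_K\cap\inr\Delta_S$ into $\inr(\rho(\Delta_S))$. Conversely, for $y\in\inr(\rho(\Delta_S))$ the fibre $\Phi_y$ meets $\inr\Delta_S$, whence $T=S$ and the minimiser $\mu^*$ built above, which by injectivity is the unique point of $\Lambda_K$ over $y$, lies in $\inr\Delta_S$. Therefore the homeomorphism $\rho|_{\Lambda_K}$ restricts to a bijection $\Lambda_K\cap\inr\Delta_S\to\inr(\rho(\Delta_S))$, and by complementation to a homeomorphism $\Lambda_K\cap\partial\Delta_S\to\partial(\rho(\Delta_S))$, as claimed. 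One may alternatively recover surjectivity onto the interior from Claim~\ref{cl:A2} together with invariance of domain, but the convex-potential argument has the advantage of handling the boundary fibres uniformly.
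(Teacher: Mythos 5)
Your proof is correct, but it reaches surjectivity by a genuinely different route than the paper. The injectivity step coincides with the paper's (Claim~\ref{cl:A6.1}): two points of $\Lambda_K$ in one fiber would violate Claim~\ref{cl:A5} on the line through them. For surjectivity, however, the paper first shows that $\rho$ sends $\Lambda_K\cap\partial\Delta_S$ into $\partial(\rho(\Delta_S))$ (again via Claim~\ref{cl:A5}), then invokes the $\exp/\log$ diffeomorphism of Claim~\ref{cl:A2} to identify $\Lambda_K\cap\inr(\Delta_S)$ as an open disk of the same dimension as $\rho(\Delta_S)$, and deduces from Brouwer's invariance of domain together with compactness of $\Lambda_K$ that the interior, and hence all of $\rho(\Delta_S)$, is covered. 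You instead exhibit the point of $\Lambda_K$ in each fiber $\Phi_y=\Delta_S\cap\rho^{-1}(y)$ directly, as the minimizer of the strictly convex negative-entropy potential: the first-order conditions on the maximal support $T$ give membership in $\lambda_\kappa$ for $\kappa\in K\cap\R^T$, and your support dichotomy (which, to be fully explicit, one applies to both $\kappa$ and $-\kappa$ --- legitimate since $K$ is a subspace) makes both sides of the remaining equations vanish. This is essentially the moment-map/Birch's-theorem argument from algebraic statistics. What each approach buys: the paper's is purely topological and reuses Claim~\ref{cl:A2}, which is needed anyway for the dimension count in Proposition~\ref{pr:A7}, at the cost of invoking invariance of domain and a compactness bootstrap; yours is constructive, avoids Brouwer entirely, handles fibers lying wholly in $\partial\Delta_S$ uniformly, and delivers the boundary correspondence $\Lambda_K\cap\partial\Delta_S\cong\partial(\rho(\Delta_S))$ as an immediate by-product of the fiberwise description rather than as a separate step (Claims~\ref{cl:A6.2}--\ref{cl:A6.3}). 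Note that you would still need Claim~\ref{cl:A2} elsewhere to pin down the dimension of the disk in Proposition~\ref{pr:A7}, so the variational argument replaces only the surjectivity portion of the paper's reasoning, not the dimension computation.
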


We split the proof of Claim~\ref{cl:A6} into several parts.

\begin{claim}
\label{cl:A6.1}
Under the conditions of Claim~{\rm\ref{cl:A6}}, the restriction of  $\rho$ to~$\Lambda_K$ is injective.
\end{claim}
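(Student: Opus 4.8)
The plan is to prove injectivity by contradiction, reducing everything to the single-point intersection furnished by Claim~\ref{cl:A5}. Suppose $\mu,\nu\in\Lambda_K$ with $\mu\neq\nu$ and $\rho(\mu)=\rho(\nu)$, i.e.\ $\mu-\nu\in K$. Since $\mu$ and $\nu$ both lie in $\Delta_S$, their coordinates each sum to one, so $\kappa:=\mu-\nu$ lies in the hyperplane $\VV$ of functions summing to zero; thus $\kappa$ is a nonzero vector of $K\subset\VV$.

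The key geometric observation is to consider the affine line $L$ through $\mu$ and $\nu$, whose direction is $\kappa$. Because $L$ meets $\Delta_S$ in a segment containing the two distinct points $\mu,\nu$, the intersection $I=L\cap\Delta_S$ is a nondegenerate interval; let $a$ and $b$ be its endpoints, so $a\neq b$. As $a,b,\mu,\nu$ are collinear, the vector $a-b$ is a scalar multiple of $\kappa$; since $K$ is a linear subspace and $\kappa\in K$, it follows that $a-b\in K$ as well.

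Then I would invoke the definition of $\Lambda_K$ as the intersection $\bigcap_{\kappa'\in K}\lambda_{\kappa'}$: because $a-b\in K$, we have $\Lambda_K\subseteq\lambda_{a-b}$, so both $\mu$ and $\nu$ lie in $\lambda_{a-b}\cap L$. But Claim~\ref{cl:A5} asserts that $\lambda_{a-b}$ meets $L$ in exactly one point. Hence $\mu=\nu$, contradicting the assumption $\mu\neq\nu$, and the restriction of $\rho$ to $\Lambda_K$ is therefore injective.

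I do not expect a serious obstacle here. The single point requiring care is the passage $a-b\in K$: one must note that $a-b$ is not merely \emph{parallel} to an element of $K$ but genuinely lies in $K$, which uses precisely that $K$ is a linear subspace (closed under scalar multiplication). This is exactly why Claim~\ref{cl:A6} restricts to subspaces, and everything else is immediate once Claim~\ref{cl:A5} is in hand.
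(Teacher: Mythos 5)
Your proof is correct and follows essentially the same route as the paper: both reduce to the single-point intersection of Claim~\ref{cl:A5} applied to the line through the two putative preimages. The only cosmetic difference is that you deduce $a-b\in K$ from linearity of $K$, while the paper works with $\lambda_{x-y}$ directly and invokes Claim~\ref{cl:A3} (collinear vectors define the same set $\lambda_\kappa$) to transfer the conclusion of Claim~\ref{cl:A5}; the two devices are interchangeable here.
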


\begin{proof}
Aiming at a contradiction, assume that in  $\Lambda_K$ there are distinct points
$x$ and $y$ with $\rho(x)=\rho(y)$.
Then the vector $x-y$ lies in~$K$, so $\{x,y\}\subset\Lambda_K\subset\lambda_{x-y}$, which contradicts Claim~\ref{cl:A5} if we choose $L$ to be the affine line passing through the points~$x$ and~$y$ (see also Claim~\ref{cl:A3}).
\end{proof}

\begin{claim}
\label{cl:A6.2}
Under the conditions of Claim~{\rm\ref{cl:A6}}, the map~$\rho$ sends the set $\Lambda_K\cap\partial\Delta_S$ to the boundary $\partial(\rho(\Delta_S))$ of the polyhedron~$\rho(\Delta_S)$.
\end{claim}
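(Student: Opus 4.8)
The plan is to recast the statement as an assertion about which points of the polyhedron $\rho(\Delta_S)$ are interior, and then to contradict Claim~\ref{cl:A5}. The first ingredient is the fact that a linear projection carries relative interior onto relative interior, i.e.\ $\rho(\inr(\Delta_S))=\inr(\rho(\Delta_S))$. Only one inclusion is needed: given $\rho(x)\in\inr(\rho(\Delta_S))$, fix any $z_0\in\inr(\Delta_S)$ and prolong the segment $[\rho(z_0),\rho(x)]$ slightly beyond $\rho(x)$ while staying in $\rho(\Delta_S)$; this writes $\rho(x)=t\,\rho(z_0)+(1-t)\,\rho(z')$ with $t\in(0,1)$ and $z'\in\Delta_S$, whence $z:=t z_0+(1-t)z'\in\inr(\Delta_S)$ satisfies $\rho(z)=\rho(x)$. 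Consequently $\rho(x)\in\inr(\rho(\Delta_S))$ if and only if the fiber $x+K=\rho^{-1}(\rho(x))$ meets $\inr(\Delta_S)$. Since $\rho(x)\in\rho(\Delta_S)$ always, the claim is equivalent to: for $x\in\Lambda_K\cap\partial\Delta_S$ the affine subspace $x+K$ is disjoint from $\inr(\Delta_S)$.

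Assume, aiming at a contradiction, that there is a point $z\in(x+K)\cap\inr(\Delta_S)$. Then $\kappa:=z-x$ is a nonzero vector of~$K$ (nonzero because $x\in\partial\Delta_S$ and $z\in\inr(\Delta_S)$ are distinct). Let $L$ be the line through $x$ and $z$; it has direction $\kappa\in K\subseteq\VV$ and lies in the affine hull $x+\VV$ of~$\Delta_S$. Hence $I:=L\cap\Delta_S$ is a nondegenerate interval with endpoints $a,b$, the difference $a-b$ is collinear with~$\kappa$ and therefore $a-b\in K$. Because $L$ meets $\inr(\Delta_S)$ at~$z$, we have $\inr(I)=L\cap\inr(\Delta_S)$; as $x\in\partial\Delta_S$ forbids $x\in\inr(\Delta_S)$, the point $x$ cannot lie in $\inr(I)$, so $x$ is one of the endpoints $a,b$.

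Now $a-b\in K$ gives $\Lambda_K=\bigcap_{\kappa'\in K}\lambda_{\kappa'}\subseteq\lambda_{a-b}$, so $x\in\lambda_{a-b}\cap L$. By Claim~\ref{cl:A5} the set $\lambda_{a-b}$ meets $L$ in a single point, and that point lies in the interior of~$I$; thus $x\in\inr(I)$, contradicting the conclusion of the previous paragraph. Therefore no such $z$ exists, $x+K$ is disjoint from $\inr(\Delta_S)$, and so $\rho(x)\in\partial(\rho(\Delta_S))$, as required.

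I expect the only delicate point to be the reduction in the first paragraph, namely the clean identification of ``$\rho(x)$ interior'' with ``$x+K$ meets $\inr(\Delta_S)$'' and the attendant need to keep track of relative interiors in the various affine hulls. The inclusion $K\subseteq\VV$ is exactly what guarantees that $L$ sits in the affine hull of~$\Delta_S$, so that $\inr(I)=L\cap\inr(\Delta_S)$ and $x$ is pinned to an endpoint of~$I$. Once this bookkeeping is in place, Claim~\ref{cl:A5} supplies the contradiction with essentially no further work.
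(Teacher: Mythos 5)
Your proof is correct and follows essentially the same route as the paper: assume a point of $\Lambda_K\cap\partial\Delta_S$ maps into $\inr(\rho(\Delta_S))$, use $\rho(\inr(\Delta_S))=\inr(\rho(\Delta_S))$ to find an interior point of $\Delta_S$ in the same fiber, and derive a contradiction with Claim~\ref{cl:A5} because the boundary point is an endpoint of the interval cut out by the line joining the two points. You spell out the reduction and the endpoint bookkeeping in more detail than the paper does, and you use linearity of $K$ where the paper cites Claim~\ref{cl:A3}, but the argument is the same.
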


\begin{proof}
Aiming at a contradiction, assume that in $\Lambda_K\cap\partial\Delta_S$
there is a point~$b$ with $\rho(b)$ in $\inr(\rho(\Delta_S))$. Observe that the interior of the polyhedron $\rho(\Delta_S)$ is covered by the interior of the polyhedron~$\Delta_S$ (and, moreover, $\rho(\inr(\Delta_S))=\inr(\rho(\Delta_S))$).
Therefore, in~$\inr(\Delta_S)$ there is a point~$y$ with $\rho(y)=\rho(b)$.
Then the vector $b-y$ lies in~$K$.
By Claim~\ref{cl:A5}, the point~$b$ does not lie in~$\lambda_{b-y}$, since it is an endpoint of the interval cut out of~$\Delta_S$ by the straight line passing through~$b$ and~$y$ (see also Claim~\ref{cl:A3}). This contradicts the assumption that $b$ lies in~$\Lambda_K$.
\end{proof}

\begin{claim}
\label{cl:A6.3}
Under the conditions of Claim~{\rm\ref{cl:A6}}, the image $Q:=\rho(\Lambda_K\cap\operatorname{int}(\Delta_S))$ coincides with the interior $\inr(\rho(\Delta_S))$ of the polyhedron~$\rho(\Delta_S)$.
\end{claim}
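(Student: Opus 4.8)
The plan is to prove the two inclusions $Q\subseteq\inr(\rho(\Delta_S))$ and $\inr(\rho(\Delta_S))\subseteq Q$ separately, the first being immediate and the second requiring a connectedness argument. For the easy inclusion I would invoke the observation already recorded in the proof of Claim~\ref{cl:A6.2}, namely $\rho(\inr(\Delta_S))=\inr(\rho(\Delta_S))$. Since $\Lambda_K\cap\inr(\Delta_S)\subseteq\inr(\Delta_S)$, applying $\rho$ gives at once $Q\subseteq\rho(\inr(\Delta_S))=\inr(\rho(\Delta_S))$.

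For the reverse inclusion the strategy is to show that $Q$ is simultaneously open, closed, and nonempty inside the connected set $\inr(\rho(\Delta_S))$, which forces $Q=\inr(\rho(\Delta_S))$. First I pin down the dimensions. Write $d:=|S|-1-\dim(V_K)$; since $K$ is a linear subspace we have $V_K=K$, so Claim~\ref{cl:A2} says that $\Lambda_K\cap\inr(\Delta_S)$ is a $d$-dimensional manifold (homeomorphic to the open disk). On the other hand, because $K\subseteq\VV$, the quotient $\rho$ carries the affine hull of $\Delta_S$ onto an affine subspace $A$ of $\R^S/K$ whose direction is $\VV/K$, of dimension $\dim(\VV)-\dim(K)=d$; thus $\rho(\Delta_S)$ is a full-dimensional polyhedron in $A$ and $\inr(\rho(\Delta_S))$ is open in $A\cong\R^d$. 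The restriction of $\rho$ to $\Lambda_K\cap\inr(\Delta_S)$ is continuous and, by Claim~\ref{cl:A6.1}, injective, and it maps a $d$-manifold into the $d$-dimensional space $A$. By invariance of domain this restriction is an open map, so $Q$ is open in $A$, and hence open in $\inr(\rho(\Delta_S))$.

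Next I would establish that $Q$ is closed in $\inr(\rho(\Delta_S))$ by taking a sequence $z_n\in Q$ with $z_n\to z\in\inr(\rho(\Delta_S))$ and choosing $\mu_n\in\Lambda_K\cap\inr(\Delta_S)$ with $\rho(\mu_n)=z_n$. Since $\Delta_S$ is compact, a subsequence of $(\mu_n)$ converges to some $\mu\in\Delta_S$, and continuity of $\rho$ yields $\rho(\mu)=z$. As $\Lambda_K$ is closed (indeed compact, by Claim~\ref{cl:A1}), we get $\mu\in\Lambda_K$. The crucial point is that $\mu$ cannot lie on $\partial\Delta_S$: if it did, Claim~\ref{cl:A6.2} would place $\rho(\mu)=z$ in $\partial(\rho(\Delta_S))$, contradicting $z\in\inr(\rho(\Delta_S))$. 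Hence $\mu\in\Lambda_K\cap\inr(\Delta_S)$ and $z=\rho(\mu)\in Q$, which proves closedness. Finally $Q$ is nonempty, since by Claim~\ref{cl:A2} the set $\Lambda_K\cap\inr(\Delta_S)$ is a nonempty open disk, and $\inr(\rho(\Delta_S))$ is connected, being the interior of a convex polyhedron; a nonempty subset that is both open and closed in a connected space is the whole space, so $Q=\inr(\rho(\Delta_S))$.

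I expect the main obstacle to be the closedness step, specifically ruling out that the limit point $\mu$ escapes to the boundary $\partial\Delta_S$; this is exactly where Claim~\ref{cl:A6.2} carries the essential weight, preventing an interior target point from being approached only through preimages that degenerate toward the boundary. The openness step, by contrast, is a routine application of invariance of domain once the dimension bookkeeping $\dim A=d=\dim(\Lambda_K\cap\inr(\Delta_S))$ is in place, and the easy inclusion together with connectedness then closes the argument.
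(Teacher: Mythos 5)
Your proposal is correct and follows essentially the same route as the paper: openness of $Q$ via invariance of domain after matching the dimension of $\Lambda_K\cap\inr(\Delta_S)$ with that of $\rho(\Delta_S)$, then exclusion of a boundary point of $Q$ inside $\inr(\rho(\Delta_S))$ using compactness of $\Lambda_K$ together with Claim~\ref{cl:A6.2}. The paper phrases the last step as ``a point of $\operatorname{clos}(Q)\setminus Q$ in the interior would have to come from $\Lambda_K\cap\partial\Delta_S$'' rather than as your explicit open-closed-connected argument, but the content is identical.
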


\begin{proof}
We have proved that the restriction of~$\rho$ to $\Lambda_K$ is injective (Claim~\ref{cl:A6.1}), and the set $\Lambda_K\cap\operatorname{int}(\Delta_S)$ is homeomorphic to the open disk of dimension~$|S|-1-\dim (K)$ (see Claim~\ref{cl:A2}), which coincides with the dimension of the polyhedron~$\rho(\Delta_S)$. As is well known, 
\emph{the image of a continuous embedding of a Euclidean space into itself is open} (Brouwer's invariance of domain theorem). It follows that the set~$Q$ is contained in $\inr(\rho(\Delta_S))$ and open in~$\rho(\Delta_S)$. Therefore, if $Q$ did not cover the domain $\operatorname{int}(\rho(\Delta_S))$, then there would be a point~$x$ in this domain that does not belong to~$Q$ but belongs to the closure of~$Q$. Since $\Lambda_K$ is compact, this would mean that in~$\Lambda_K\cap\partial\Delta_S$ there is a point~$b$ with $\rho(b)=x$. However, this contradicts Claim~\ref{cl:A6.2}.
\end{proof}

\begin{proof}[Completing the proof of Proposition~{\rm\ref{cl:A6}}]
Since $\Lambda_K$ is compact (see Claim~\ref{cl:A1}), it follows from Claim~\ref{cl:A6.3} that $\rho(\Lambda_K)=\rho(\Delta_S)$.
Since the restriction of~$\rho$ to~$\Lambda_K$ is injective (see Claim~\ref{cl:A6.1}), it follows that $\rho$ yields a bijection between~$\Lambda_K$ and~$\rho(\Delta_S)$. It remains to observe that, as one can easily see, a continuous bijection of a metric compact space is a homeomorphism. Thus $\rho$ yields a homeomorhism between $\Lambda_K$ and $\rho(\Delta_S)$, and, in view of Claim~\ref{cl:A6.3}, a homeomoprhism between the spheres~$\Lambda_K\cap\partial\Delta_S$ and $\partial(\rho(\Delta_S))$.
\end{proof}

\begin{proof}[Completing the proof of Proposition~{\rm\ref{pr:A7}}]
Claim~\ref{cl:A4} reduces the situation to the case of a linear subspace. In this case, the existence of a homeomorphism follows from Claim~\ref{cl:A6}, and the refinement on the dimension, from Claim~\ref{cl:A2}.
\end{proof}

\paragraph{Remark.}
If a subset~$K$ of the space~$\VV$ consists of vectors with integer coordinates but is not a subgroup, then the set
$\Lambda_K$ is not necessarily homeomorphic to a disk. For instance, for
 $S=\{1,2,3\}$, we have
$$
\Lambda_{\{(1,1,-2),(1,-2,1)\}}=\{(1/3,1/3,1/3), (1,0,0)\}.
$$

\section{Characters}
\label{sec:char}

The theory of the absolute of commutative groups and semigroups has an interesting reformulation in terms of characters of semigroups.  Here by a \emph{character} we mean a homomorhism to the multiplicative group of nonnegative real numbers. For a general theory of characters, see~\cite{CP61, Ëå70, Ëå71}.

Let $G$ be an arbitrary commutative semigroup with a finite system of generators~$S$
and $\Damma_S(G)$ be the branching monoid for the pair $(G,S)$. Every central measure~$\nu$ of the pair~$(G,S)$ gives rise to the functional $f_\nu$ on the monoid~$\Damma_S(G)$ whose value at an element of~$\Damma_S(G)$ is equal to the measure of a path leading to this element. Clearly, under this correspondence, functions corresponding to central measures with i.i.d.\ increments are exactly characters of the monoid~$\Damma_S(G)$ whose values on the generators from~$S$ sum to 1 (such characters will be called \emph{normalized}, or \emph{probability characters}).
In these terms, Theorem~\ref{th:Levyprocess-AM} takes the following form.

\begin{corollary}
In a commutative semigroup~$G$ with an arbitrary finite system of generators~$S$, the above correspondence
$\nu\mapsto f_\nu$  is a bijection of the set of ergodic central measures
 {\rm(}i.\,e., the absolute{\rm)} onto the set of normalized $\R_{0+}$\nobreakdash-characters on the branching monoid~$\Damma_S(G)$.
\end{corollary}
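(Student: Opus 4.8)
The plan is to translate the statement into the language already developed in the paper, so that it becomes an immediate consequence of Theorem~\ref{th:Levyprocess-AM} together with the dictionary between central measures and functionals on the branching monoid. First I would unwind the correspondence $\nu\mapsto f_\nu$: for a central measure $\nu$ of the pair $(G,S)$, and for an element $w$ of the branching monoid $\Damma_S(G)$ reached by a path of length $n$ from the distinguished vertex, the centrality of $\nu$ guarantees that every length-$n$ path to $w$ has the same $\nu$-measure, so the assignment $f_\nu(w):=\nu(\text{any path to }w)$ is well defined. Thus $\nu\mapsto f_\nu$ is a well-defined map from central measures to nonnegative functionals on $\Damma_S(G)$, and it is clearly injective, since the cylinder measures of all finite paths determine $\nu$.

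Next I would identify, on each side, what corresponds to the i.i.d.\ increments property. On the measure side, $\mathscr{I}_S(G)$ was identified with distributions $\mu$ on $S$, where the probability of an increment by $s$ equals $\mu(s)$. For a measure with i.i.d.\ increments the weight of a path spelling the word $s_{i_1}s_{i_2}\cdots s_{i_n}$ is the product $\mu(s_{i_1})\cdots\mu(s_{i_n})$, which depends only on the multiset of generators used, i.e.\ only on the resulting element of $\Damma_S(G)$. Hence $f_\nu$ is multiplicative: $f_\nu(w_1 w_2)=f_\nu(w_1)\,f_\nu(w_2)$, so it is exactly a character of $\Damma_S(G)$ into $\R_{0+}$, and the normalization $\sum_{s\in S}\mu(s)=1$ is precisely the condition $\sum_{s\in S}f_\nu(s)=1$, i.e.\ the character is normalized. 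Conversely, a normalized $\R_{0+}$-character $f$ defines a distribution $\mu(s):=f(s)$ on $S$ and, via products along paths, a measure in $\mathscr{I}_S(G)$ whose associated functional is $f$; and such a measure is automatically central, because the character being well defined on $\Damma_S(G)$ means it respects exactly the relations that identify words of equal length, which is the centrality (precentrality) condition. This sets up a bijection between normalized $\R_{0+}$-characters of $\Damma_S(G)$ and central measures with i.i.d.\ increments.

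Finally I would invoke Theorem~\ref{th:Levyprocess-AM}, which asserts that for a commutative semigroup the set of ergodic central measures (the absolute) coincides with the set of central measures with i.i.d.\ increments. Composing this equality of sets with the bijection established in the previous paragraph yields that $\nu\mapsto f_\nu$ restricts to a bijection of the absolute $\mathscr{A}_S(G)$ onto the set of normalized $\R_{0+}$-characters of $\Damma_S(G)$, which is the assertion of the corollary.

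I do not expect any serious obstacle here, since the corollary is essentially a restatement of the key theorem under a terminological dictionary; the only point requiring care is the verification that multiplicativity of $f_\nu$ on $\Damma_S(G)$ is genuinely equivalent to the i.i.d.\ increments property, and in particular that a character is forced to respect \emph{only} the length-preserving relations (those built into $\Damma_S(G)$), not all relations of $G$. This is exactly why the branching monoid $\Damma_S(G)$, rather than $G$ itself, is the correct domain for the characters, and I would make this explicit to avoid the pitfall of demanding multiplicativity across relations that change word length.
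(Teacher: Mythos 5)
Your proposal is correct and follows essentially the same route as the paper: the paper's own proof is just the two-sentence observation that ergodicity gives i.i.d.\ increments by Theorem~\ref{th:Levyprocess-AM} (hence $f_\nu$ is a normalized character), and conversely that restricting a normalized character to~$S$ yields a precentral distribution whose associated measure is ergodic and central. Your version merely spells out the dictionary (well-definedness of $f_\nu$, equivalence of multiplicativity on~$\Damma_S(G)$ with the i.i.d.\ increments property, and the role of the length-preserving relations) that the paper treats as already established in the paragraph preceding the corollary.
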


\begin{proof}
If $\nu$ is ergodic, then the functional~$f_\nu$ is a normalized $\R_{0+}$-character, since the increments are i.i.d.\ 
{\rm(}Theorem~{\rm\ref{th:Levyprocess-AM})}. Conversely, if a functional~$f$ on~$\Damma_S(G)$ is a normalized $\R_{0+}$-character, then the restriction of~$f$ to~$S$ gives a precentral distribution, so $f=f_\nu$ for the ergodic central measure~$\nu$ corresponding to this distribution.
\end{proof}

A character is called \emph{trivial} if it vanishes at all elements of the semigroup except the identity element. As one can easily see, the set of nontrivial $\R_{0+}$\nobreakdash-characters forms a fiber bundle over the set of normalized $\R_{0+}$-characters with fiber $(0,+\infty)$. Thus the problem of describing the absolute of commutative semigroups is essentially equivalent to the problem of describing the set of  $\R_{0+}$-charactes of commutative branching monoids. Correspondingly, assertions about the absolute of commutative semigroups can be translated into assertions about characters of branching semigroups. For instance, Theorem~\ref{th:abel-top-cancel} gives the following.

\begin{corollary}
\label{cor:abel-top-cancel-char}
For a commutative cancellative branching semigroup, the space of nontrivial $\R_{0+}$-characters endowed with the weak topology is homeomorphic to the direct product of a closed disk of certain dimension and an open interval.
\end{corollary}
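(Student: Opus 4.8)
The plan is to assemble the statement from two ingredients that are already available: the identification of normalized characters with the absolute, and an explicit trivialization of the scaling action.

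First, I would invoke the preceding Corollary, which gives a bijection---a homeomorphism in the weak topology---between the absolute $\mathscr{A}_S(G)$ and the set of normalized $\R_{0+}$-characters on the branching monoid $\Damma_S(G)$. For a cancellative commutative branching semigroup, Theorem~\ref{th:abel-top-cancel} then shows that this set is homeomorphic to a closed disk of the dimension specified there. Thus the base of the fiber bundle mentioned above is a closed disk.

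Second, I would make the fiber bundle structure explicit through the scaling action of $(0,+\infty)$. For $t>0$ and a character $\chi$, set $(t\cdot\chi)(g):=t^{|g|}\chi(g)$, where $|g|$ denotes the level of $g$, which is well defined in a branching monoid since all words representing $g$ have equal length. Because levels add, $|gh|=|g|+|h|$, the function $t\cdot\chi$ is again a character, and on a generator $s$ (level one) it simply multiplies the value by $t$. A character is nontrivial precisely when some generator value is positive, that is, when $t(\chi):=\sum_{s\in S}\chi(s)>0$; in that case $t(\chi)^{-1}\cdot\chi$ is normalized. The maps $\chi\mapsto(t(\chi)^{-1}\cdot\chi,\,t(\chi))$ and $(\psi,t)\mapsto t\cdot\psi$ are mutually inverse, so together they furnish a homeomorphism between the space of nontrivial $\R_{0+}$-characters and the product of the space of normalized characters with $(0,+\infty)$.

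Combining the two steps, the space of nontrivial characters is homeomorphic to the product of a closed disk with $(0,+\infty)$, i.e.\ with an open interval, which is the assertion. The step I expect to require the most care is the bicontinuity of this trivialization in the topology of pointwise convergence: each coordinate---the finite sum $t(\chi)$ and each value $t^{|g|}\chi(g)$---depends continuously on the data, and nontriviality keeps the fiber coordinate strictly inside $(0,+\infty)$, so that no appeal to contractibility of the base is needed. Well-definedness rests squarely on the branching hypothesis, which guarantees that the level $|g|$ is unambiguous and additive.
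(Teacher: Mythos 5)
Your proposal is correct and follows essentially the same route as the paper, which derives the corollary from the identification of the absolute with normalized characters together with the observation that the nontrivial characters fiber over the normalized ones with fiber $(0,+\infty)$. You additionally supply the explicit trivialization $(\psi,t)\mapsto t\cdot\psi$ with $(t\cdot\chi)(g)=t^{|g|}\chi(g)$, which the paper leaves implicit ("as one can easily see"), and this correctly uses the branching hypothesis to make $|g|$ well defined and additive, so no appeal to contractibility of the base is needed.
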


In the case of commutative and nilpotent groups, there is a correspondence between the $\R_+$-characters of the group and the main part of the absolute. This correspondence is known, it has been studied from the point of view of eigenfunctions of Laplacians. We discuss it in more detail in a paper on the absolute of the Heisenberg group, which is currently in preparation.

\end{document}